\newtheorem{theorem}{Theorem}[section]
\newtheorem{lemma}[theorem]{Lemma}
\newtheorem{corollary}[theorem]{Corollary}
\newtheorem{proposition}[theorem]{Proposition}
\newtheorem{thm}{Theorem}[section]
\newtheorem{lem}[thm]{Lemma}
\newtheorem{prop}[thm]{Proposition}
\newcommand{\circo}{\accentset{\circ}}
\providecommand{\abs}[1]{\lvert#1\rvert}
\DeclareMathOperator{\tr}{tr}
\DeclareMathOperator{\diam}{diam}
\newcommand{\ho}{\accentset{\circ}{A}}
\newcommand{\la}{\langle}
\newcommand{\ra}{\rangle}
\newcommand{\s}{\gamma}
\newcommand{\p}{\partial}
\newcommand{\3}{1}
\newcommand{\4}{2}
\newcommand{\e}{\epsilon}
\newcommand{\al}{\alpha}
\newcommand{\Rp}{Rm^{\perp}}
\newcommand{\Acirc}{\accentset{\circ}{A}}
\newcommand{\mc}{\mathcal}
\DeclareMathOperator{\Div}{div}
\newcommand{\R}{\mathbb{R}}
\newcommand{\mbb}{\mathbb}
\newcommand{\Kp}{K^{\perp}}
\begin{document}

%\begin{frontmatter}
\title{Evolving Pinched Submanifolds of the Sphere by Mean Curvature Flow}
\author{Charles Baker}
\address{Australian eHealth Research Centre\\
Level 5 - UQ Health Sciences Building 901/16, Royal Brisbane and Women's Hospital\\
Herston  Qld  4029\\
Australia}
\email{Charles.Baker@csiro.au}

\author{Huy The Nguyen}
\address{School of Mathematical Sciences\\ 
Queen Mary University of London\\ 
Mile End Road\\ 
London E1 4NS, UK}
%\ead{huy.nguyen@maths.uq.edu.au}
%\cortext[cor1]{Corresponding Author}
\email{h.nguyen@qmul.ac.uk}
%\thanks{The authors were supported by The Leverhulme Trust}
%\gammaubjclass[2010]{Primary 53C44,53C42 ; Secondary 53C21}
%\end{frontmatter}

\maketitle

\begin{abstract} 
In this paper, we prove convergence of the high codimension mean curvature flow in the sphere to either a round point or a totally geodesic sphere assuming a pinching condition between the norm squared of the second fundamental form and the norm squared of the mean curvature and the background curvature of the sphere. We show that this pinching is sharp for dimension $n\geq 4$ but is not sharp for dimension $n=2,3$. For dimension $n=2$ and codimension $2$, we consider an alternative pinching condition which includes the normal curvature of the normal bundle. Finally, we sharpen the Chern-do Carmo-Kobayashi curvature condition for surfaces in the four sphere - this curvature condition is sharp for minimal surfaces and we conjecture it to be sharp for curvature flows in the sphere.   
\end{abstract}
\section{Introduction}
We concern ourselves with submanifolds of the sphere evolving with velocity equal to mean curvature and seek to understand the widest class of initial submanifolds that converge to totally geodesic submanifolds or shrink to points in finite time. The equivalent problem for hypersurfaces of the sphere has been investigated by Huisken \cite{Huisken1987}. Mean curvature flow of hypersurfaces has been intensively studied since Huisken's seminal result on the flow of convex surfaces, but in contrast, progress in high codimension has been slow, dogged by the presence of normal curvature. Recently, the authors made a breakthrough for mean curvature flow of two surfaces of codimension two in a Euclidean background \cite{Baker2017}, and showed that by explicitly including normal curvature in the pinching cone, the equivalent hypersurface result could almost be obtained. The presence of normal curvature in high codimension creates additional unfavourable reaction terms driving singularity formation. As our recent paper demonstrates, at least in codimension two, the additional unfavourable reaction terms can potentially be controlled by careful incorporation of normal curvature into the initial pinching cone. The main results presented in this paper are enabled by an improved understanding of how to control normal curvature along the mean curvature flow. Herein, we present three new results, the first two concerning mean curvature flow of submanifolds of the sphere subject to different initial pinching conditions (the second involving normal curvature), and the third a classification of submanifolds of the sphere with pointwise pinched intrinsic and normal curvatures:
\begin{thm}\label{thm:MainThm1}
Let $\Sigma_0^n = F_0(\Sigma^n)$ be a closed submanifold smoothly immersed in $\mathbb{S}^{n+k}$.  If $\Sigma_0$ satisfies
\begin{equation*}
	\begin{cases}
		\abs{A}^2 \leq \frac{4}{3n}\abs{H}^2 + \frac{n}{2} \bar K, \quad n = 2,3 \\
		\abs{A}^2 \leq \frac{1}{n-1}\abs{H}^2 + 2\bar K, \quad n \geq 4,
	\end{cases}
\end{equation*}
then either
\begin{enumerate}
	\item MCF has a unique, smooth solution on a finite, maximal time interval $0 \leq t < T < \infty$ and the submanifold $\Sigma_t$ contracts to a point as $t \rightarrow T$; or
	\item MCF has a unique, smooth solution for all time $0 \leq t < \infty$ and the submanifold $\Sigma_t$ converges to a totally geodesic submanifold $\Sigma_{\infty}$.
\end{enumerate}
\end{thm}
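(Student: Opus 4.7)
The plan is to extend the Andrews--Baker and Baker--Nguyen programs for high-codimension mean curvature flow to the spherical background, following the template pioneered by Huisken's 1987 paper on hypersurfaces of the sphere. Four stages organize the argument: derive the evolution equations in $\mathbb{S}^{n+k}$; show the pinching cone is preserved; extract interior estimates (improved pinching and gradient bounds); and split into a finite-time round-point blow-up or a long-time convergence to a totally geodesic limit.

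First I would compute the Simons-type identities for MCF in $\mathbb{S}^{n+k}$. Setting $Q = |A|^2 - a|H|^2 - b\bar K$ and writing out $\partial_t Q - \Delta Q$, the reaction decomposes as the usual Euclidean expression (quartic in $A$, with a $|R^{\perp}|^2$ contribution in codimension $\geq 2$) plus $\bar K$-corrections of schematic form $2n\bar K |\accentset{\circ}{A}|^2 - 2\bar K|H|^2$. Demanding that the $\bar K$-terms be non-positive on $\{Q=0\}$ fixes $b$, while demanding the Euclidean terms be non-positive on $\{Q=0\}$ recovers the familiar flat constants $1/(n-1)$ for $n\geq 4$ and $4/(3n)$ for $n=2,3$. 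The scalar maximum principle applied to $Q$ then gives preservation of the pinching cone. This also explains why the statement exactly interpolates between Huisken's hypersurface sphere constants and the Andrews--Baker high-codimension constant.

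With pinching preserved, I would run a Stampacchia iteration in the spirit of Huisken--Andrews--Baker on a family $f_{\sigma,\eta} = (|\accentset{\circ}{A}|^2 - \eta |H|^2)_+^{\sigma} (|H|^2+\bar K)^{\sigma-1}$ to obtain an improved pinching estimate of the form $|\accentset{\circ}{A}|^2 \leq C_\eta (|H|^2+\bar K)^{1-\epsilon} + C_\eta$, together with a Bernstein/Shi-type gradient bound $|\nabla A|^2 \leq C(|H|^4+1)$. These estimates drive the dichotomy. If $\max_{\Sigma_t}|H|^2 \to \infty$ at a finite time $T$, rescaling by $|H|_{\max}$ produces in the limit a smooth, totally umbilical submanifold of constant positive sectional curvature, so the original flow contracts to a round point in the sense of Huisken. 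If instead $|H|$ remains bounded on $[0,\infty)$, smooth estimates on all derivatives of $A$ give long-time existence; a monotonicity for $\int_{\Sigma_t}|H|^2\,d\mu$ modulo the ambient correction from $\bar K$, combined with the pinching cone, forces $|A|^2 \to 0$ uniformly, yielding smooth convergence to a totally geodesic limit $\Sigma_\infty$.

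The principal obstacle is the normal-curvature contribution $R^{\perp}\!\ast\! A \!\ast\! A$ in the evolution of $|A|^2$, which is not sign-definite and cannot be discarded in codimension $k\geq 2$. Absorbing it into $|\accentset{\circ}{A}|^4$ via an algebraic inequality of the form $|\accentset{\circ}{A}|^4 + |R^{\perp}|^2 \leq C(a)|\accentset{\circ}{A}|^4$, valid only under sufficiently tight pinching, is what forces the precise numerical constants in the hypothesis, and is the reason the $n=2,3$ case separates from the $n\geq 4$ case with the weaker (and, as the introduction notes, non-sharp) constant $4/(3n)$. Handling this term cleanly, and ensuring the spherical reaction terms do not destroy the delicate balance, is the technical heart of the proof.
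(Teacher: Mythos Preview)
Your overall strategy is sound and would succeed; indeed, it is essentially the approach of the first author's doctoral thesis, which the paper explicitly acknowledges. The preservation-of-pinching step (your second paragraph) matches the paper's Section~3 exactly. However, the paper deliberately departs from the route you outline at the next stage.

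You propose a Stampacchia iteration on a function like $f_{\sigma,\eta} = (|\accentset{\circ}{A}|^2 - \eta|H|^2)_+^{\sigma}(|H|^2+\bar K)^{\sigma-1}$ to obtain improved pinching $|\accentset{\circ}{A}|^2 \leq C(|H|^2+\bar K)^{1-\epsilon}$, followed by the standard convergence machinery. The paper instead \emph{avoids Stampacchia entirely}: it proves a direct pointwise gradient estimate by applying the maximum principle to the quantity $|\nabla A|^2/g^{2-\sigma}$, where $g = \tfrac{1}{n-1}|H|^2 - |A|^2 + 2\bar K$ is the pinching gap (strictly positive by hypothesis). This yields $|\nabla A|^2 \leq (\gamma_1|A|^4 + \gamma_2)e^{-\delta_0 t}$ with no integral estimates at all. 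The singularity analysis is then carried out by a parabolic blow-up argument: if $|H|\to\infty$, one rescales to a limit flow in $\mathbb{R}^{n+k}$, applies the strong maximum principle to $|A|^2/|H|^2$, and invokes Lawson's classification of submanifolds with $\nabla A=0$ to force the limit to be $\mathbb{S}^n$. The long-time case is handled not by your proposed monotonicity of $\int|H|^2$ (which you leave vague and which does not straightforwardly work here), but by first ruling out $|H|_{\max}\to\infty$ at $T=\infty$ via a Bonnet-diameter argument, and then using the exponential decay of $|\nabla A|$ to extract a minimal limit with parallel second fundamental form, which the evolution equation forces to be totally geodesic.

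What each approach buys: your Stampacchia route is classical and self-contained, but requires the ``painful estimation of the Simons identity nonlinearity'' and the Poincar\'e-type inequality that the paper explicitly wishes to circumvent. The paper's pointwise gradient estimate plus blow-up is shorter, more transparent, and transfers the hard algebra into a soft compactness-and-rigidity argument; it also makes the exponential decay in the long-time case immediate.
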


\begin{thm}\label{thm:MainThm2}
Suppose $\Sigma_0=F_0(\Sigma^2)$ is a closed surface smoothly immersed in $\mathbb{S}^{4}$.  If $\Sigma_0$ satisfies $\abs{A}^2 +2\gamma|K^{\bot}| \leq k\abs{H}^2+4( k - \frac 1 2)\bar{K}$, where $ \gamma = 1 - \nicefrac43 k$ and $k \leq \nicefrac{29}{40}$,
then the mean curvature flow of $\Sigma_0$ has a unique smooth solution $\Sigma_t$ on a maximal time interval $t \in [0,T)$. If $T$ is finite then there exists a sequence of rescaled mean curvature flows $F_j : \Sigma^2 \times I_j \rightarrow \mathbb{R}^{4}$ containing a subsequence of mean curvature flows (also indexed by $j$) that converges to a limit mean curvature flow $F_{\infty} : \Sigma^2_{\infty} \times (-\infty, 0] \rightarrow \mathbb{R}^{4}$ on compact sets of $\mathbb{R}^{4} \times \mathbb{R}$ as $j \rightarrow \infty$. Moreover, the limit mean curvature flow is a shrinking sphere. If $T=\infty$ then the flow converges to a totally geodesic sphere. 
\end{thm}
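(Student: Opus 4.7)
The plan is to adapt the Euclidean codimension-two argument of \cite{Baker2017} to the spherical background by carrying along the $\bar K$ terms that appear in every evolution equation. The first step is to prove that the pinching quantity
\begin{equation*}
\mc{Q} = |A|^2 + 2\gamma|\Kp| - k|H|^2 - 4\bigl(k - \tfrac12\bigr)\bar K
\end{equation*}
remains non-positive along the flow. Starting from Simons' identity and the codimension-two Ricci equation (which here acquire additional $\bar K$ background terms), I would compute $(\partial_t - \Delta)\mc{Q}$ and reduce the question to a pointwise algebraic inequality showing that the reaction terms have the correct sign whenever $\mc{Q} = 0$. To handle the non-smoothness of $|\Kp|$ at points where the two normal components of $A$ fail to be linearly independent, I would work with the smoothed surrogate $\sqrt{|\Kp|^2 + \varepsilon}$ and pass to the limit, or equivalently use a viscosity-type maximum principle. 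The sharpness of the resulting algebraic inequality is what fixes the threshold $k \leq \nicefrac{29}{40}$ and the calibration $\gamma = 1 - \nicefrac{4k}{3}$.

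Next I would upgrade preservation to a quantitative gain: exhibit $\sigma > 0$ such that $|\Acirc|^2 (|H|^2 + c\bar K)^{\sigma - 1}$ is a supersolution of the heat operator, so that it decays pointwise on the sets where $|H|$ is large. This is the spherical analogue of the ``pinching improves'' step of the Euclidean paper, and it is forced through by feeding the strict inequality from step one into a Stampacchia iteration. Combining this with standard Bando--Shi type estimates for $|\nabla^m A|^2$ supplies the uniform smooth control needed for a compactness argument.

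For the finite-time case, I would rescale around a sequence $(x_j, t_j) \to (\cdot, T)$ chosen so that the maximum curvature equals one at time zero in the rescaled flow. Since the ambient sphere is blown up to flat $\R^4$, the background curvature term scales away and Hamilton's compactness theorem yields a smooth limit ancient mean curvature flow in $\R^4$ to which the preserved pinching applies with $\bar K = 0$. The pinching-improvement step forces $|\Acirc| \equiv 0$ on the limit, and Huisken's monotonicity identifies it as a round shrinking sphere. For $T = \infty$, the preserved pinching rules out singularity formation, so $|A|$ is uniformly bounded and $\mbb{S}^4$ is compact; integrating the evolution of $|A|^2$ against the gain term from step two, together with standard convergence theory on the sphere, gives smooth convergence to a totally geodesic $\mathbb{S}^2 \subset \mathbb{S}^4$.

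The hard part will be the first step: proving the algebraic inequality that underlies pinching preservation. In codimension two with both the non-smooth $|\Kp|$ correction and the spherical background curvature present simultaneously, extracting the sharp constant $k = \nicefrac{29}{40}$ requires decomposing the reaction terms in a frame that simultaneously diagonalises the mean-curvature normal direction and the traceless part of $A$, and then combining the resulting quadratic-form inequality with a sharpened Chern--do Carmo--Kobayashi bound (the paper's third announced result). It is this three-way interaction between the $|\Kp|$ reaction terms, the classical Simons terms, and the new $\bar K \cdot A$ background terms that determines the exact pinching constants in the statement, and getting all three to balance at once is the delicate part of the argument.
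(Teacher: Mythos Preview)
Your overall architecture matches the paper's, but you have misidentified where the obstruction lies. The calibration $\gamma = 1 - \tfrac{4}{3}k$ is \emph{not} determined by the reaction terms: it is forced by the \emph{gradient} terms. Differentiating $|\Kp|$ produces a first-order contribution $-2\tfrac{\Kp}{|\Kp|}\nabla_{\!evol}\Kp$ in the evolution of $\mc{Q}$, and this has no definite sign. The paper controls it via a new pointwise estimate $|\nabla A|^2 \geq 2\,\nabla_{\!evol}\Kp$ specific to surfaces (Proposition~\ref{prop_grad}, proved by writing out all components and applying Cauchy--Schwarz), which combined with the usual $|\nabla H|^2 \leq \tfrac{4}{3}|\nabla A|^2$ gives
\[
-2\Bigl(|\nabla A|^2 + 2\gamma\tfrac{\Kp}{|\Kp|}\nabla_{\!evol}\Kp - k|\nabla H|^2\Bigr) \leq \bigl(-2 + 2\gamma + \tfrac{8}{3}k\bigr)|\nabla A|^2,
\]
non-positive exactly when $\gamma \leq 1 - \tfrac{4}{3}k$. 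Only \emph{after} fixing $\gamma$ at this boundary value do the reaction terms become the bottleneck; writing them in the special frame (components $a,b,c$) and numerically exploring the resulting quartic then yields $k = \tfrac{29}{40}$. Your plan to reduce everything to a single algebraic inequality on the reaction terms would not close, because without the gradient-of-normal-curvature estimate you cannot absorb the $\nabla_{\!evol}\Kp$ term at all.

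Two smaller points. The Chern--do~Carmo--Kobayashi refinement (Theorem~\ref{thm:MainThm3}) is an independent result and plays no role in the proof of pinching preservation; it is not the source of the sharp constant, so drop it from your plan. And the paper actually replaces your proposed Stampacchia iteration by a direct pointwise gradient estimate $|\nabla A|^2 \leq C g^{2}$ (with $g$ the pinching gap) followed by a blow-up argument as in Section~\ref{s:gradientEstimate}; the Stampacchia route from \cite{Baker2017} would also work, so this particular difference is a matter of taste rather than a gap.
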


\begin{thm}\label{thm:MainThm3}
Suppose a two surface $\Sigma^2$ minimally immersed in $\mbb{S}^4$ satisfies $|K^\perp| \leq 2 |K|$. Then either 
\begin{enumerate}
	\item $|A| ^ 2 \equiv 0 $ and the surface is a geodesic sphere; or
	\item $|A| ^ 2 \not \equiv 0 $, in which case either
	\begin{enumerate}
		\item $|K^{\perp}| = 0$ and the surface is the Clifford torus, or
		\item $K^{\perp} \neq 0$ and it is the Veronese surface.
	\end{enumerate}
\end{enumerate}
\end{thm}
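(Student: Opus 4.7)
The plan is to specialise the Simons identity to the codimension-two setting, apply the pinching hypothesis $|K^\perp| \leq 2|K|$ through a pointwise maximum argument and an integrated version, and conclude parallelism of the second fundamental form. The trichotomy then follows from the classical classification of minimal surfaces in $\mathbb S^4$ with $\nabla A \equiv 0$.

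First I would record the Simons identity $\tfrac{1}{2}\Delta|A|^2 = |\nabla A|^2 + n|A|^2 - (P+Q)$ for minimal submanifolds of $\mathbb S^{n+k}$ and specialise to $n=k=2$. A direct computation in a trace-free normal frame -- parametrising the two traceless symmetric $2\times 2$ matrices $A^3, A^4$ and tracking the commutator $[A^3,A^4]$, whose single independent entry is proportional to $K^\perp$ -- yields the exact codimension-two identity $P+Q = |A|^4 + 2|K^\perp|^2$. Combined with the Gauss equation $|A|^2 = 2(1-K)$ for minimal surfaces in $\mathbb S^4$, this gives
\[
\tfrac{1}{2}\Delta|A|^2 \;=\; |\nabla A|^2 \,+\, 2|A|^2 K \,-\, 2|K^\perp|^2.
\]

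Next I would apply the pointwise maximum principle. At a point attaining $\max|A|^2$, $\Delta|A|^2 \leq 0$ forces $|K^\perp|^2 \geq |A|^2 K$; inserting the hypothesis $|K^\perp|^2 \leq 4K^2$ and $|A|^2 = 2 - 2K$ reduces this to $K(3K - 1) \geq 0$, and hence either $\max|A|^2 \leq \tfrac{4}{3}$ or $\max|A|^2 \geq 2$. In the first regime, the classical Chern-do Carmo-Kobayashi integration of Simons against $P+Q \leq \tfrac{3}{2}|A|^4$ combined with $|A|^2 \leq \tfrac{4}{3}$ forces $\int|\nabla A|^2 \leq 0$, so $\nabla A \equiv 0$ and $|A|^2$ is constant: either $|A|^2 \equiv 0$ (totally geodesic), or $|A|^2 \equiv \tfrac{4}{3}$ and $|K^\perp| \equiv \tfrac{2}{3}$, which together with parallelism identifies the Veronese. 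For the second regime, integrating the sharpened Simons identity and using the hypothesis together with Gauss-Bonnet gives
\[
\int_\Sigma |\nabla A|^2 \;\leq\; 12\int_\Sigma K^2 \,-\, 8\pi\chi(\Sigma).
\]
For $\chi(\Sigma) \leq 0$ this forces $K \equiv 0$, hence $|K^\perp| \equiv 0$ and $|A|^2 \equiv 2$ with $\nabla A \equiv 0$; the standard flat-normal-bundle reduction of codimension then places the surface inside a totally geodesic $\mathbb S^3 \subset \mathbb S^4$ as the Clifford torus. The sphere subcase $\chi = 2$ in this regime must be handled separately, e.g.\ by showing that no minimal two-sphere in $\mathbb S^4$ can simultaneously carry the pinching and satisfy $\max|A|^2 \geq 2$, returning us to the first regime.

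The main obstacle I anticipate is precisely this sphere subcase of the regime $\max|A|^2 \geq 2$: the crude integrated Simons inequality above is conclusive only for $\chi \leq 0$, and excluding pinched minimal spheres with $\max|A|^2 \geq 2$ appears to require a secondary Bochner-type identity -- either for $|K^\perp|^2$ directly, or for a combined quantity such as $|A|^2 \pm 2|K^\perp|$ chosen so that its Laplacian has a definite sign under the hypothesis. Once parallelism of $A$ is secured in all cases, the classical Yano-Ishihara classification of minimal submanifolds of the sphere with parallel second fundamental form delivers the stated trichotomy.
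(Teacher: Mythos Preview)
Your sharpened Simons identity $\tfrac12\Delta|A|^2 = |\nabla A|^2 + 2K|A|^2 - 2|K^\perp|^2$ is exactly what the paper establishes, but from that point on you take a substantially harder road than necessary and it breaks down. The paper's key observation---which you are missing---is the universal pointwise bound $|K^\perp| = 2|ac| \le a^2+b^2+c^2 = \tfrac12|A|^2$, valid for any surface in codimension two. Multiplying this against the hypothesis $|K^\perp|\le 2K$ gives $|K^\perp|^2 \le K|A|^2$ \emph{everywhere}, so the zeroth-order term in Simons is globally nonnegative and $\tfrac12\Delta|A|^2 \ge |\nabla A|^2 \ge 0$ on all of $\Sigma$. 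The strong maximum principle on a closed surface then forces $|A|^2$ constant and $\nabla A\equiv 0$ in one stroke; there is no need for a dichotomy at the maximum point, for integrated CdCK inequalities, or for Gauss--Bonnet.

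Your regime $\max|A|^2\le \tfrac43$ does work via the classical integration, but your regime $\max|A|^2\ge 2$ is genuinely broken, not just incomplete in the sphere subcase. The inequality $\int|\nabla A|^2 \le 12\int K^2 - 8\pi\chi$ is correct, but for $\chi\le 0$ the right-hand side is $12\int K^2 + 8\pi|\chi|\ge 0$, which gives no constraint whatsoever and certainly does not force $K\equiv 0$. So even before the sphere obstacle you flag, the torus/higher-genus case is not handled. Once $|A|^2$ is constant and $\nabla A\equiv 0$, the paper also proceeds differently from your appeal to a black-box classification: it applies the Laplacian to $K^\perp$ and to $h_{112}^2$ separately (both via the Simons-type identity), extracting the algebraic relations $2 - b^2 - 3a^2 - 3c^2 = 0$ and $2 - |A|^2 = 0$ or $b=0$, which together with $|A|^4 + 2|K^\perp|^2 = 2|A|^2$ pin down $|A|^2\in\{0,\tfrac43,2\}$ and then invoke Chern--do Carmo--Kobayashi only at the very end.
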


The first theorem appeared in the first author's doctoral thesis, which has not yet been published in peer-reviewed form, with the less optimal constant $2 (n-1)/3$ preceding the background curvature. The proof presented here differs the first author's doctoral thesis by improving upon the pinching constant and by replacing the complicated Stampaccia iteration by a more elegant blow-up argument using a new pointwise gradient estimate; more on this is said below shortly. We briefly mention that the improvement of the constant from $2(n-1)/3 \bar K$ to $ n/2 \bar K$ can also be achieved by making use of the discovery made by \cite{Huisken2009} that the nonlinearity in the Simons identity need only be positive to the highest order of mean curvature in order for the Stampacchia iteration argument to work.

The main theorem of \cite{Andrews2010} is optimal for submanifolds of dimension four and greater (independent of the codimension), where the tori $\mathcal{S}^{n-1}(\epsilon) \times \mathcal{S}(1) \subset \mathbb{S}^{n} \times \mathbb{S}^{2}$ are obstructions to improving the pinching constant beyond $1/(n-1)$. The theorem is suboptimal in dimensions two and three, with pinching constant $k=4/(3n)$, because of unfavourable reaction terms. In a recent breakthrough, for codimension two surfaces in a Euclidean background, the authors were able to improve this constant from $4/(3n)$ to $29/40$ by including the normal curvature in the pinching cone. Theorem \ref{thm:MainThm2} extends the result of \cite{Baker2017} to submanifolds of a spherical background. With the inclusion of normal curvature, the new pinching condition turns out to be optimal for the reaction terms, but the gradient terms still obstruct the attainment of optimal pinching, analogous to the flow of two dimensional hypersurfaces in a spherical background \cite{Huisken1987}. In the hypersurface case, Huisken achieves a constant of $3/4$ whereas for codimension two surfaces we achieve $3/4 - 1/40$. In both cases the constants are determined the gradient terms; in the codimension two case, the term $1/40$ appears in order to accommodate the gradient of normal curvature. We do not know whether the pinching constant $29/40$ can be extended to $3/4$. We conjectured in \cite{Baker2017} that (for two surfaces of codimension two in a Euclidean background) the true obstruction to the theorem is the Clifford torus immersed in $\mathbb{R}^4$, corresponding to the pinching cone $|A|^2 < |H|^2$. In the present case, we conjecture that the true obstruction to Theorem \ref{thm:MainThm2} is the Veronese surface, a minimal submanifold of the four sphere.

The third result we present, Theorem \ref{thm:MainThm3}, is a new classification of minimal submanifolds of the sphere, made possible by our exact computation of the Simons identity nonlinearity. The Simons identity plays a key role in a series of classification results initiated in a famous paper by Chern, do Carmo and Kobayashi \cite{ChernCarmoKobayashi1970}, later extended to encompass submanifolds of the sphere with parallel mean curvature by Santos \cite{Santos1994}. With our refined understanding of the Simons identity nonlinearity we are able to provide a new classification result depending not on the length of the second fundamental form, but rather on a pointwise pinching of the intrinsic and normal curvatures. Combined with a careful analysis of the curvature terms, the proof is achieved by an application of the strong maximum principle, mimicking modern proofs of Simons' famous result \cite{Simons1968}.

%Let $ \Sigma^n, n \geq 2 $ be a closed submanifold smoothly immersed in $ \mbb S ^{n+k}$, so that locally $\Sigma_0^n$ is locally given by a diffeomorphism
%\begin{align*}
%F_0: U\subset \R^n \hookrightarrow F_0(\Sigma) \subset \Sigma_0 \subset \mbb S ^{n+k}
%\end{align*}

%We say that $ \Sigma$ is flowing by mean curvature if we have a one-parameter family of diffeomorphisms $ F(\cdot, t)$ with corresponding submanifolds $ \Sigma_t$ such that 
%\begin{align*}
%\frac{\partial}{\partial t} F(x,t) = H(x,t), \quad x \in U,\quad F( \cdot, 0) &= F_0.
%\end{align*} 

The outline of this paper is as follows. The broad arc of the proof is the same as \cite{Huisken1984}, however the proof develops by a more efficient series of estimates, completely avoiding the Poincar\'e-type inequality constructed by painful estimation of the Simons identity nonlinearity and the Stampacchia iteration. After proving that curvature pinching is preserved along the flow in section \ref{s: Preservation of curvature pinching}, in place of the Stampacchia iteration, in section \ref{s:gradientEstimate} we prove a pointwise gradient estimate and blow-up argument to characterise the shape of the evolving submanifold at a finite time singularity. The case of infinite lifespan is also treated, giving rise to the second case in theorem \ref{thm:MainThm1}. We then move on to consider the case of evolving two surfaces in the four-sphere, and extend the normal curvature-pinched cone introduced in \cite{Baker2017} to a spherical background and prove its preservation along the flow in section \ref{s:twoSurfaces}. The remainder of the proof is similar to that presented in section \ref{s:gradientEstimate} (or by the Stampacchia iteration argument in \cite{Baker2017}) with straightforward adjustments and is not duplicated. In the final section, we present our refinement of the famous theorem of Chern, do Carmo and Kobayashi. The exact estimation of the Simons identity nonlinearity was used (in a Euclidean background) to derive the Poincar\'{e}-type inequality used in the Stampacchia iteration argument in \cite{Baker2017}.

\textbf{Acknowledgements.}  
The second author would like to acknowledge the support of the EPSRC through the grant EP/S012907/1.

\section{The evolution equations in a sphere}\label{s: The evolution equations in a sphere}
The geometric evolution equations for high codimension mean curvature flow in an arbitrary Riemannian background were derived in \cite{Andrews2010}. We recall the evolution of the second fundamental form is given by
\begin{align*}
\nabla_{\partial_t}h_{ij} &= \Delta h_{ij}+h_{ij}\cdot h_{pq}h_{pq}+h_{iq}\cdot h_{qp}h_{pj}
+h_{jq}\cdot h_{qp}h_{pi}-2h_{ip}\cdot h_{jq} h_{pq}\notag\\
&\quad +2\bar R_{ipjq}h_{pq}-\bar R_{kjkp}h_{pi}-\bar R_{kikp}h_{pj}+h_{ij\alpha}\bar R_{k\alpha k\beta}\nu_\beta\notag\\
&\quad -2h_{jp\alpha}\bar R_{ip\alpha\beta}\nu_\beta-2h_{ip\alpha}\bar R_{jp\alpha\beta}\nu_\beta
+\bar\nabla_k\bar R_{kij\beta}\nu_\beta-\bar\nabla_i\bar R_{jkk\beta}\nu_\beta.\label{eq:reactdiffuse}
\end{align*}
We make use of shorthand notation for the reaction terms
\begin{align*}
R _1 = \sum_{\alpha,\beta} \bigg( \sum _{i ,j} h _{ij \alpha} h _{ij \beta} \bigg) ^ 2 + | Rm ^ \perp | ^ 2, \quad R_2 = \sum_{i,j} \bigg( \sum _{\alpha} H _ \alpha h_{ij \alpha}\bigg)^ 2,
\end{align*}
where $ R_{ij\alpha \beta}^{\perp} = h_{ip\alpha}h_{jp\beta} - h_{jp\alpha}h_{ip\beta}$.

The evolution equations can be simplified in background spaces of constant curvature such as the sphere. Suppose $\partial_a$, $1 \leq a \leq {n+k}$ is an orthogonal local frame for background sphere of constant curvature $\bar K$. In such a frame the curvature tensor of the sphere is
\begin{equation}\label{e: curv tensor of sphere}
	\bar R_{abcd} = \bar{K}( \delta_{ac}\delta_{bd} - \delta_{ad}\delta_{bc} ).
\end{equation}
Using this as well as the fact that the derivates of the constant background curvature are zero, we see the evolution for the second fundamental form is
\begin{align*}
\nabla _{\partial _ t}  h _{ij \alpha} = \triangle h _{ij\alpha} + R_{ij \alpha} + 2 \bar K H _ \alpha  g _{ij} -n \bar K h _{ij \alpha}
\end{align*}
where
\begin{align*}
 R_{ij\alpha} = h_{ij \beta}\cdot h _{pq \beta} h _{pq \alpha} + h _{iq \beta} \cdot h_{qp \beta} h _{pj \alpha}  + h _{jq \beta} \cdot h _{qp \beta} h _{pi \alpha}  - 2 h _{ip\beta} \cdot h _{jq \beta} h_{pq \alpha}.
\end{align*}
Tracing this expression with respect to $i,j$ we get 
\begin{align*}
\nabla _{\partial_t} H_ \alpha & = \triangle H _ \alpha + R _ \alpha  + n \bar K H_\alpha 
\quad \text{
where} \quad 
R_\alpha = H_\beta \cdot h_{ij \beta} h_{ij\alpha}.
\end{align*}
With a further line of computation, we see the evolution equation for $\abs{A}^2$ is given by  
\begin{equation}\label{e: evol h2 sphere}
	\frac{\partial}{\partial t}\abs{A}^2 = \Delta\abs{A}^2 - 2\abs{\nabla A}^2 + 2R_1 + 4\bar K\abs{H}^2 - 2n\bar K\abs{A}^2,
\end{equation}
and $|H|^2$ by
\begin{equation}\label{e: evol H2 sphere 2}
	\frac{\p}{\p t} \abs{H}^2 = \Delta\abs{H}^2 - 2\abs{\nabla H}^2 + 2 R_2 + 2n\bar{K}\abs{H}^2.
\end{equation}
The evolution of normal curvature is first computed in \cite{BakerNguyen2016} where it is found to be
\begin{equation}\label{eqn:evolNormalCurv}
\begin{split}
\frac{\partial}{\partial t}   R ^{\perp} _{ij\alpha \beta} &=\Delta R ^{\perp} _{ij \alpha \beta} - 2 \sum _{p,r}\left ( \nabla_q h _{ip\alpha} \nabla _ q h _{jp\beta} - \nabla _ q h _{jp \alpha} \nabla _ q h _{i p \beta}\right ) \\
&\quad+R_{ip\alpha}h_{jp\beta} + h_{ip\alpha}R_{jp\beta} - R_{jp\alpha}h_{ip\beta} - h_{jp\alpha}R_{ip\beta}- n \bar K R_{ij\alpha \beta}^\perp\\
\end{split}
\end{equation}

The contracted form of Simons' identity takes the form
\begin{equation}\label{e: contracted Simons' identity}
	\frac{1}{2} \Delta \abs{\ho}^2 = \ho_{ij} \cdot \nabla_i\nabla_j H + \abs{\nabla \ho}^2 + Z + n\bar K\abs{\ho }^2,
\end{equation}
where again
\begin{equation*}
	Z = -\sum_{\alpha, \beta} \bigg( \sum_{i,j} h_{ij\alpha} h_{ij\beta} \bigg)^2 - \abs{\Rp}^2 + \sum_{\substack{i,j,p \\ \alpha, \beta}} H_{\alpha} h_{ip\alpha} h_{ij\beta} h_{pj\beta}.
\end{equation*}
And finally, the basic gradient estimate
\begin{equation} \label{eqn: basic grad est 1 sphere} 
	\abs{\nabla A}^2 \geq \frac{3}{n+2} \abs{\nabla H}^2
\end{equation}
carries over unchanged.

\section{Preservation of curvature pinching}\label{s: Preservation of curvature pinching}
We now prove that a certain pointwise curvature pinching condition holding on the initial submanifold is preserved along the flow.

\begin{lem}
If a solution $F : \Sigma \times [0,T) \rightarrow \mathbb{S}^{n+k}$ of the mean curvature flow satisfies
\begin{equation}
	\begin{cases}
		\abs{A}^2 \leq \frac{4}{3n}\abs{H}^2 + \frac{n}{2}\bar K, \quad n = 2,3 \\
		\abs{A}^2 \leq \frac{1}{n-1}\abs{H}^2 + 2\bar K, \quad n \geq 4
	\end{cases}
\end{equation}
at $t = 0$, then this remains true as long as the solution exists.
\end{lem}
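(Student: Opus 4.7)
The plan is to define the pinching quantity $Q = |A|^2 - c|H|^2 - d\bar K$, where $(c,d) = (4/(3n), n/2)$ for $n=2,3$ and $(c,d)=(1/(n-1), 2)$ for $n\ge 4$, and show that $Q \leq 0$ is preserved along the flow by the parabolic maximum principle. Combining the evolution equations \eqref{e: evol h2 sphere} and \eqref{e: evol H2 sphere 2}, and using that $\bar K$ is a (time-independent) constant, I compute
\begin{equation*}
(\partial_t - \Delta) Q = -2\bigl(|\nabla A|^2 - c|\nabla H|^2\bigr) + 2(R_1 - cR_2) + (4-4cn)\bar K |H|^2 - 2nd\bar K^2 - 2n\bar K\, Q,
\end{equation*}
after substituting $|A|^2 = Q + c|H|^2 + d\bar K$ in the zeroth-order $-2n\bar K|A|^2$ term. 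Then at a first point where $Q=0$, it suffices to show the right-hand side (minus the $Q$-term) is non-positive, so that $Q$ cannot cross zero.

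For the gradient term, the basic Kato-type inequality \eqref{eqn: basic grad est 1 sphere} gives $|\nabla A|^2 \geq \tfrac{3}{n+2}|\nabla H|^2$, and a quick check of the constants shows $c \leq 3/(n+2)$ in every case ($2/3 \leq 3/4$, $4/9 \leq 3/5$, and $1/(n-1) \leq 3/(n+2)$ for $n\ge 3$). Hence $-2(|\nabla A|^2 - c|\nabla H|^2) \leq 0$ and this term can simply be dropped.

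The hard part will be the reaction terms $R_1 - cR_2$. For an ambient Euclidean space, Andrews--Baker and Li--Li established purely algebraic inequalities showing that the tensor pinching $|A|^2 \leq c|H|^2$ forces $R_1 - cR_2 \leq 0$ with the very constants $4/(3n)$ (for $n=2,3$) and $1/(n-1)$ (for $n\ge 4$) appearing in the theorem. Since $R_1$ and $R_2$ are built from $A$ and $H$ alone (not from the ambient curvature), the same algebraic inequalities apply to any second fundamental form with $|A|^2 \leq c|H|^2$, regardless of the background. At the maximum of $Q$ where $Q=0$, the hypothesis reads $|A|^2 = c|H|^2 + d\bar K$, so the tensor $A$ does \emph{not} directly satisfy $|A|^2 \leq c|H|^2$ pointwise; the $d\bar K$ slack introduces correction terms. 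The strategy is to bound $R_1 - cR_2$ by splitting off the traceless part $\Acirc = A - \tfrac{1}{n}g\otimes H$, expressing everything in terms of $|\Acirc|^2 = |A|^2 - |H|^2/n$, and tracking how the $d\bar K$ propagates: this produces an estimate of the form $R_1 - cR_2 \leq C(c,n)\,\bar K\,|H|^2 + (\text{lower order in }\bar K)$, where the leading coefficient $C(c,n)$ is exactly what the background curvature terms $(4-4cn)\bar K|H|^2 - 2nd\bar K^2$ can absorb.

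The final step is the algebraic verification that the reaction-plus-ambient-curvature contribution is non-positive. For $n\ge 4$ with $c = 1/(n-1)$ one computes $4 - 4cn = -4/(n-1) < 0$, and with $d=2$ the term $-2nd\bar K^2 = -4n\bar K^2$ is also negative, giving ample room to absorb the correction from the reaction estimate. For $n=2,3$ with $c = 4/(3n)$, one finds $4-4cn = -4/3$, again negative, and the choice $d=n/2$ is dictated by requiring the $\bar K^2$ contribution $-2nd\bar K^2 = -n^2\bar K^2$ to dominate the residual correction terms produced by the Andrews--Baker reaction estimate with the $d\bar K$ slack. Together with the maximum principle applied to the evolution equation for $Q$ (which contains the benign term $-2n\bar K\,Q$), this closes the argument. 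The main technical obstacle is thus the sharp algebraic tracking of the $d\bar K$ correction in the reaction estimate $R_1 - cR_2$, for which we reduce to the Euclidean inequalities of Andrews--Baker and Li--Li with explicit error bookkeeping.
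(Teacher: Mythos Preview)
Your overall strategy is exactly the paper's: set $Q=|A|^2-c|H|^2-d\bar K$, apply the maximum principle, dispose of the gradient terms via \eqref{eqn: basic grad est 1 sphere}, and control $R_1-cR_2$ at a point where $Q=0$ using the Andrews--Baker/Li--Li algebraic inequalities with the $d\bar K$ slack tracked. Your evolution equation for $Q$ and the gradient check are correct.

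The gap is in the reaction-term step, where your claimed form of the estimate is not what actually appears. You assert that after tracking the slack one obtains $R_1-cR_2\le C(c,n)\,\bar K|H|^2+(\text{lower order in }\bar K)$, to be absorbed by $(4-4cn)\bar K|H|^2-2nd\bar K^2$. But if you carry out the Andrews--Baker computation in the frame $\nu_1=H/|H|$, split $\Acirc=\Acirc_1+\Acirc_-$, and substitute $|H|^2=(|\Acirc|^2-d\bar K)/(c-\tfrac1n)$ at $Q=0$, the reaction-plus-ambient expression is a polynomial in $|\Acirc_1|^2$, $|\Acirc_-|^2$, $\bar K$ containing genuine \emph{quartic} terms
\[
\Bigl(6-\tfrac{2}{n(c-1/n)}\Bigr)\bigl(|\Acirc_1|^2|\Acirc_-|^2+|\Acirc_-|^4\bigr)-3|\Acirc_-|^4,
\]
which are not of order $\bar K|H|^2$ at all. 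The crux of the argument---and the reason the specific constants $c=4/(3n)$ and $c=1/(n-1)$ appear---is that this leading coefficient $6-\tfrac{2}{n(c-1/n)}$ is exactly zero for $c=4/(3n)$ and $\le 0$ for $c=1/(n-1)$, $n\ge4$. Only after this is established do the remaining terms form a quadratic in $(|\Acirc_-|^2,\bar K)$, and the values $d=n/2$ (for $n=2,3$) and $d=2$ (for $n\ge4$) are then determined by requiring its discriminant to be negative together with non-positivity of the $|\Acirc_1|^2\bar K$ coefficient. Your sign observations on $(4-4cn)$ and $-2nd\bar K^2$ are not by themselves enough to close this; the discriminant computation is where $d$ is actually pinned down. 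You also need to treat the case $H=0$ separately (the frame $\nu_1=H/|H|$ is unavailable there); the paper handles it directly via the Li--Li bound $R_1\le\tfrac32|\Acirc|^4$.
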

\begin{proof}
Let us consider the quadratic pinching condition $\mathcal{Q} = \abs{A}^2 - \alpha{H}^2 - \beta \bar{K}$, where $\alpha$ and $\beta$ are constants. Since we are the initial submanifold to have $H = 0$, in order to compute in a local frame for the normal bundle where $\nu_1 = H / \abs{H}$ we will consider two cases: 1) $H = 0$ and 2) $ H \neq 0$.  For the the case $H=0$, the evolution equations for $\abs{A}^2$ and $\abs{H}^2$ give us
\begin{equation}\label{e: pinch lem sphere 1}
	\frac{\p}{\p t} \mathcal{Q} = \Delta \mathcal{Q} - 2 \abs{\nabla \ho}^2 + 2 R_1 - 2n\bar{K} \abs{\ho}^2.
\end{equation}
Using the estimate of \cite{Li1992} on the normal directions of $R_1$ we have
\begin{equation*}
	R_1 = \sum_{\alpha,\beta}\bigg( \sum_{i,j} \ho_{ij\alpha}\ho_{ij\beta}\bigg)^2 + \sum_{\alpha, \beta} N(\ho_{\alpha}\ho_{\beta} - \ho_{\beta}\ho_{\alpha}) \leq \frac{3}{2} \abs{\ho}^4.
\end{equation*}
The reaction terms of \eqref{e: pinch lem sphere 1} are estimated by
\begin{equation*}
2 R_1 - 2n\bar{K} \abs{\ho}^2 \leq 3 \abs{\ho}^4 - 2n\bar{K} \abs{\ho}^2.
\end{equation*}
If $\mathcal{Q}$ is not (strictly) negative, then $\abs{\ho}^2 = \beta \bar{K}$ and
\begin{equation*}
	3 \abs{\ho}^4 - 2n\bar{K} \abs{\ho}^2 < -\beta (2n - 3\beta) \bar{K}^2
\end{equation*}
which is (strictly) negative as long as $\beta < (2/3)n$ which can not happen and the lemma follows in this case.  

Now let us consider the case $H \neq 0$.  We use the special local frames of \cite{Andrews2010}, and the evolution equation becomes
\begin{equation}\label{eqn: pinch lem sphere 2}
	\begin{split}
		\frac{\p}{\p t} \mathcal{Q} &= \Delta \mathcal{Q} - 2( \abs{\nabla A}^2 - \alpha\abs{\nabla H}^2 ) \\
			&\quad+ 2R_1 - 2\alpha R_2 - 2n \bar K\abs{\ho}^2 - 2 n ( \alpha  - 1/n ) \bar K\abs{H}^2.
	\end{split}
\end{equation}
Arguing as in Euclidean case, if $\mathcal{Q}$ does not remain (strictly) negative, we may replace $\abs{H}^2$ with $( \abs{\ho}^2 - \beta \bar{K} ) / ( \alpha - 1/n )$, and estimating as in \cite{Andrews2010},
\begin{align*}
	&2R_1 - 2\alpha R_2 - 2n \bar K\abs{\ho}^2 - 2 n ( \alpha  - 1/n ) \bar K\abs{H}^2 \\
		&\quad \leq 2\abs{\ho_1}^2 - 2( \alpha - 1/n )\abs{\ho_1}^2\abs{H}^2 + \frac{2}{n}\abs{\ho_1}^2\abs{H}^2 - \frac{2}{n}(\alpha - 1/n)\abs{H}^4 + 8\abs{\ho_1}^2\abs{\ho_-}^2 + 3\abs{\ho_-}^4 \\
		&\quad - 2n\bar K( \abs{\ho_1}^2 + \abs{\ho_-}^2 ) - 2n ( \alpha - 1/n) \bar{K} \abs{H}^2 \\
		&\quad\leq \left( 6 - \frac{2}{n(\alpha - 1/n)} \right)\abs{\ho_1}^2\abs{\ho_-}^2 + \left( 3 - \frac{2}{n(\alpha - 1/n)} \right)\abs{\ho_-}^4  \\
			&\quad + \left( 2\beta - 4n + \frac{2\beta}{n(\alpha - 1/n)} \right)\abs{\ho_1}^2 \bar K + 4 \left( \frac{\beta}{n(\alpha - 1/n)} - n \right)\abs{\ho_-}^2 \bar K \\ 
			&\quad - 2 \beta \left( \frac{\beta}{n(\alpha - 1/n)} - n \right) \bar K^2 \\
	&\quad= \left( 6 - \frac{2}{n(\alpha - 1/n)} \right)(\abs{\ho_1}^2\abs{\ho_-}^2 + \abs{\ho_-}^4 ) + \left( 2\beta - 4n + \frac{2\beta}{n(\alpha - 1/n)} \right)\abs{\ho_1}^2 \bar K \\
		&\quad- 3\abs{\ho_-}^4 + 4 \left( \frac{\beta}{n(\alpha - 1/n)} - n \right)\abs{\ho_-}^2 \bar K  - 2 \beta \left( \frac{\beta}{n(\alpha - 1/n)} - n \right) \bar K^2.
\end{align*}
In the last line we choose the coefficient of the $\abs{\ho_1}^2\abs{\ho_-}^2$ term as large as we can (that is $4/(3n)$) and we have the good term $-3\abs{\ho_-}^2$.  Since the last line above is a quadratic form, by requiring that the discriminant is negative, we have a strictly negative term.  We compute the disc1/nriminant as
\begin{equation*}
	\Delta = 8 \left( \frac{\beta}{n( \alpha - 1/n)} - n \right) \left\{2 \left( \frac{\beta}{n( \alpha - 1/n )} \right) - 3\beta \right\},
\end{equation*}
which is negative with the chosen $\alpha$ and $\beta$ in dimensions two to four.  For dimensions $n \geq 4$ the optimal value of $\alpha$ is $1/(n-1)$. Therefore with this restriction, the coefficient of $\abs{\ho_-}^4$ increases to $-2(n-4) - 3$.  The discriminant becomes
\begin{equation*}
	\Delta = 8 \left( \frac{\beta}{n( \alpha - 1/n )} - n \right) \Big\{2 \left( \frac{\beta}{n( \alpha - 1/n )} \right) - ( 2(n-4) + 3 ) \beta \Big\},
\end{equation*}
and which is strictly negative for $\beta = 2$ for all $n \geq 4$.  The most restrictive condition on the size of $\beta$ comes from the coefficient of the $\abs{\ho_1}^2 \bar K$ term, which gives the values of $\beta$ in the statement of the lemma.  With the chosen values of $\alpha$ and $\beta$ the right hand side of equation \label{eqn: pinch lem sphere 2} is strictly negative, which is contradiction, and so $\mathcal{Q}$ must stay strictly negative.
\end{proof}

\section{A gradient estimate for the second fundamental form}\label{s:gradientEstimate}
Here we establish a gradient estimate for the second fundamental form.

\begin{theorem}\label{thm_gradient}
Let $ \Sigma_t , t \in [0,T)$ be a closed $n$-dimensional quadratically bounded solution to the mean curvature flow in the round sphere of curvature $K$, $ \mathbb{S}_K^{n+k}$ with $n \geq 2$, that is 
\begin{align*}
|A|^2 - c|H|^2 - (2-\e)K <0
\end{align*}
with $ c = \frac{1}{n-1} - \eta \leq \frac{4}{3n}$.
 Then there exists a constant $ \gamma_1= \gamma_1 (n, \Sigma_0), \gamma_2 = \gamma_2 ( n , \Sigma_0)$ and $\delta_0=\delta_0(n, \Sigma_0)$ such that the flow satisfies the uniform estimate 
\begin{align*}
|\nabla A|^2 \leq (\gamma_1 |A|^4+\gamma_2)e^{\delta_0 t} \quad \text{for all $ t\in [0, T)$.}
\end{align*}

\end{theorem}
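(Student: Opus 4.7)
The plan is a parabolic maximum-principle argument applied to the auxiliary function
\begin{equation*}
    G = e^{-\delta_0 t}|\nabla A|^2 - \gamma_1 |A|^4 - \gamma_2,
\end{equation*}
with $\gamma_1$ chosen depending on the pinching deficit $\eta$, $\delta_0$ depending on $n$ and $\bar K$, and $\gamma_2$ depending on the initial data so that $G(\cdot,0)\le 0$. Showing $(\p_t-\Delta)G\le 0$ wherever $G\ge 0$, and using $G(\cdot,0)\le 0$, will give $G\le 0$ for all $t\in[0,T)$, which rearranges to the desired estimate after redistributing the exponential.

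The first ingredient is the evolution of $|\nabla A|^2$. Differentiating \eqref{e: evol h2 sphere} and commuting covariant derivatives through the Laplacian (using that the background sphere has constant curvature, so $\bar\nabla \bar R = 0$), I expect a Bochner-type identity of the form
\begin{equation*}
    (\p_t-\Delta)|\nabla A|^2 \le -2|\nabla^2 A|^2 + C_1|A|^2|\nabla A|^2 + C_2\bar K|\nabla A|^2,
\end{equation*}
with $C_1,C_2$ depending only on $n$; the $\bar K|\nabla A|^2$ contribution comes from both the zeroth-order $-n\bar K h_{ij\alpha}$ term in \eqref{e: evol h2 sphere} (after differentiation) and from the $[\nabla,\Delta]$-commutator acting on a tensor in a constant-curvature ambient. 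The second ingredient is the evolution of $|A|^4$, from $(\p_t-\Delta)|A|^4 = 2|A|^2(\p_t-\Delta)|A|^2 - 2|\nabla|A|^2|^2$ combined with \eqref{e: evol h2 sphere}; this produces the crucial negative gradient term $-4|A|^2|\nabla A|^2$, a sixth-order reaction $2|A|^2 R_1$, and lower-order $\bar K|A|^4$ contributions.

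Combining these, the bad term $C_1|A|^2|\nabla A|^2$ is defeated by $-4\gamma_1|A|^2|\nabla A|^2$ provided $\gamma_1>C_1/4$. The remaining sixth-order reaction $\gamma_1|A|^2 R_1$ is absorbed using exactly the same algebraic device as in the preservation-of-pinching lemma of Section \ref{s: Preservation of curvature pinching}: splitting $A$ into the $\ho_1,\ho_-$ components of \cite{Andrews2010} and invoking the strict pinching $|A|^2 - c|H|^2 \le (2-\e)\bar K$ with $c<\tfrac1{n-1}$ (respectively $c<\tfrac4{3n}$ in low dimensions), one extracts a coercive negative quartic $-c_0\eta|A|^6$ that swallows the positive reaction at the cost of $\bar K|A|^4$ and $\bar K^2|A|^2$ terms. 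All of these residual $\bar K$-terms (together with $C_2\bar K|\nabla A|^2$) are in turn absorbed by the $-\delta_0 e^{-\delta_0 t}|\nabla A|^2$ produced when $\p_t$ hits the exponential prefactor of $G$, provided $\delta_0$ is chosen large in terms of $n$, $\bar K$ and the pinching constants. Taking $\gamma_2\ge\sup_{\Sigma_0}|\nabla A|^2$ makes $G(\cdot,0)\le 0$, and the parabolic maximum principle concludes.

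The hardest step is controlling the sixth-order reaction $|A|^2 R_1$: the naive bound $R_1\le\tfrac32|A|^4$ is not enough, and one must exploit both the decomposition into traceless and pure-trace pieces and the \emph{strict} character of the pinching (the deficit $\eta>0$) to obtain a quantitatively negative remainder. A subordinate technical point is the locus where $H=0$, at which the pinching forces $|A|^2\le(2-\e)\bar K$, so $|A|$ is uniformly bounded and the gradient estimate there is recovered directly from short-time smoothness by enlarging $\gamma_2$.
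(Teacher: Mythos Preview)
Your auxiliary function $G = e^{-\delta_0 t}|\nabla A|^2 - \gamma_1 |A|^4 - \gamma_2$ has a sign problem that breaks the argument at the very step you single out as the mechanism. When you compute $(\p_t-\Delta)G$, the term $-4|A|^2|\nabla A|^2$ appearing in $(\p_t-\Delta)|A|^4$ enters with coefficient $-\gamma_1\cdot(-4) = +4\gamma_1$, not $-4\gamma_1$. So the $|A|^2|\nabla A|^2$ contributions are
\[
e^{-\delta_0 t}C_1|A|^2|\nabla A|^2 \;+\; 4\gamma_1|A|^2|\nabla A|^2,
\]
both positive: subtracting $\gamma_1|A|^4$ makes things worse, not better. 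If instead you add $\gamma_1|A|^4$ (Shi-style), the gradient terms do cancel, but then the reaction $+4\gamma_1|A|^2R_1$ enters positively and is genuinely of order $|A|^6$; the pinching $|A|^2<c|H|^2+(2-\e)\bar K$ gives no negative sixth-order term to absorb it, and your claimed ``coercive negative quartic $-c_0\eta|A|^6$'' simply is not produced by the algebra of Section~\ref{s: Preservation of curvature pinching}. So neither sign choice closes.

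The paper's proof is structurally different: it works with the \emph{quotient} $|\nabla A|^2/g^{2-\sigma}$ where $g=\tfrac{1}{n-1}|H|^2-|A|^2+2\bar K$ is the positive pinching deficit. The key point is that the evolution of $g$ contains a \emph{good} gradient term,
\[
\partial_t g \ge \Delta g + 2\kappa_n\tfrac{n+2}{3}|\nabla A|^2,\qquad \kappa_n=\tfrac{3}{n+2}-c>0,
\]
coming from the basic estimate \eqref{eqn: basic grad est 1 sphere}; in the quotient this becomes the coercive term $-2\kappa_n\tfrac{n+2}{3}\,|\nabla A|^4/g^2$. The bad $c_n|A|^2|\nabla A|^2$ from the numerator is then handled not by cancellation but by the observation (from strict pinching) that $c_n|A|^2+d_n\le Ng$, so the reaction in the quotient's evolution has the form
\[
\frac{|\nabla A|^2}{g^{2-\sigma}}\Big(c_n|A|^2+d_n-2\kappa_n\tfrac{n+2}{3}\,\frac{|\nabla A|^2}{g}\Big)
\le \frac{|\nabla A|^2}{g^{1-\sigma}}\Big(N-2\kappa_n\tfrac{n+2}{3}\,\frac{|\nabla A|^2}{g^2}\Big),
\]
and the maximum principle gives the bound directly. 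No sixth-order reaction ever has to be estimated. If you want to pursue a maximum-principle route, you should recast your argument around this quotient and the positivity of $g$ rather than a linear combination with $|A|^4$.
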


\begin{proof}
We choose here $ \kappa_n = \left( \frac{3}{n+2}-c\right)>0$. As $\frac{1}{n}\leq c\leq \frac{4}{3n}$, $n\geq 2,\kappa_n$ is strictly positive. We will consider here the evolution equation for $\frac{|\nabla A|^2}{g^2}$ where $ g = \frac{1}{n-1} |H|^2-|A|^2+ 2K $ where the initial pinching condition ensures $g$ is strictly positive. This follows since $ |A|^2-\left( \frac{1}{n-1} - \eta \right )|H|^2 +(2- \e) K  \leq  0, |H|>0$ and $\Sigma_0$ is compact, we have 
\begin{align}\label{eqn_eta}
2K + \frac{1}{n-1} |H|^2-|A|^2 >\eta | H|^2 + \e K \quad \text{or} \quad g = 2K + \frac{1}{n-1} |H|^2-|A|^2 >\eta | H|^2 + \e K>0.
\end{align}
From the evolution equations,  \eqref{eqn: pinch lem sphere 2}, we get 
\begin{align*}
\partial_t g &= \Delta g-2 \left(  \frac{1}{n-1}|\nabla H|^2-|\nabla A|^2 \right)+2  \left( c   R_2-R_1 \right)\\
 &\geq \Delta g-2  \left(  \frac{n+2}{3} \frac{1}{n-1}-1 \right) |\nabla A|^2 \\
 & \geq \Delta g+2\kappa_n  \frac{n+2}{3}| \nabla A|^2.  
\end{align*}
The evolution equation for $ |\nabla A|^2 $ is given by %\eqref{eqn_evolderiv}, that is 
\begin{align*}
 \frac \partial{\partial t}|\nabla A|^2-\Delta |\nabla A|^2 &\leq-2 |\nabla^2 A|^2+c_n |A|^2 |\nabla A|^2 + d _n|\nabla A|^2 .
\end{align*}
Let $w,z$ satisfy the evolution equations
\begin{align*}
\frac{\partial}{\partial t}w = \Delta w+W , \quad \frac{\partial}{\partial t}z = \Delta z+Z 
\end{align*}
 then we find that 
 \begin{align*}
\partial_t \left(\frac{w}{z}\right) &= \Delta\left( \frac{w}{z}\right) +\frac{2}{z}\left \la \nabla \left( \frac{w}{z}\right) , \nabla z \right \ra+\frac{W}{z}-\frac{w}{z^2} Z\\
&= \Delta\left( \frac{w}{z}\right) +2\frac{\la \nabla w , \nabla z \ra}{z^2}-2 \frac{w|\nabla z |^2}{z^3}+\frac{W}{z}-\frac{w}{z^2} Z.
\end{align*}
Furthermore for any function $g$, we have by Kato's inequality
\begin{align*}
\la \nabla g , \nabla |\nabla A|^2 \ra &\leq 2 |\nabla g| |\nabla^2 A| |\nabla A| \leq \frac{1}{g}|\nabla g |^2 | \nabla A|^2+g |\nabla^2 A|^2.     
\end{align*}
We then get
\begin{align*}
-\frac{2}{g}| \nabla^2 A|^2+\frac{2}{g}\left \la \nabla g ,\nabla \left( \frac{|\nabla A|^2}{g}\right) \right\ra \leq-\frac{2}{g}| \nabla^2 A|^2-\frac{2}{g^3}|\nabla g|^2 |\nabla A|^2+\frac{2}{g^2}\la \nabla g ,\nabla |\nabla A|^2 \ra \leq 0 .  
\end{align*}
Then if we let $ w = |\nabla A|^2 $ and $ z = g$ with $W \leq-2 |\nabla^2 A|^2+c_n |A|^2 |\nabla A|^2 +d_n|\nabla A|^2$ and $Z\geq 2\kappa_n  \frac{n+2}{3}| \nabla A|^2 $ we get 
\begin{align*}
\frac{\partial}{\partial t}\left( \frac{|\nabla A|^2}{g}\right)-\Delta \left( \frac{|\nabla A|^2}{g}\right) &\leq \frac{2}{g}\left \la \nabla g ,\nabla \left( \frac{|\nabla A|^2}{g}\right) \right \ra+\frac{1}{g}(-2 |\nabla^2 A|^2+c_n |A|^2 |\nabla A|^2 + d_n |\nabla A|^2) \\
&-2 \kappa_n \frac{n+2}{3}\frac{|\nabla A|^4}{g^2} \\
& \leq c_n |A|^2  \frac{|\nabla A|^2}{g} + d _n \frac{|\nabla A|^2}{ g} -2 \kappa_n \frac{n+2}{3}\frac{|\nabla A|^4}{g^2}.
\end{align*}
We repeat the above computation with $w = \frac{|\nabla A|^2}{g}, z = g^{1-\sigma},$ and 
we have 
\begin{align*}
\partial_t g^{1-\sigma} = (1-\sigma)g^{-\sigma} \partial_t g&\geq (1-\sigma) g^{-\sigma} \left( \triangle g + 2 \kappa_n \frac{ n+2}{3} | \nabla A|^2 \right)
\end{align*}
Computing, note that
\begin{align*}
\Delta g^{1-\sigma} &= \Div( \nabla g^{1-\sigma} ) = \Div( (1-\sigma)g^{-\sigma} \nabla g ) = (1-\sigma) g^{-\sigma} \Delta g - \sigma(1-\sigma) g^{-1-\sigma} |\nabla  g|^2
\end{align*}
so that
\begin{align*}
(1-\sigma) g^{-\sigma} \Delta g =  \Delta g^{1-\sigma}+ \sigma(1-\sigma) g^{-1-\sigma} |\nabla  g|^2.
\end{align*}
Hence, we have 
\begin{align*}
\partial_t g^{1-\sigma} \geq \Delta g^{1-\sigma}+ \sigma(1-\sigma) g^{-1-\sigma} |\nabla  g|^2+ (1-\sigma)g^{-\sigma}\left(2 \kappa_n \frac{ n+2}{3} | \nabla A|^2 \right)
\end{align*}
So that we have for the nonlinearities, $Z \geq 0$ and  
\begin{align*}
W\leq c_n |A|^2  \frac{|\nabla A|^2}{g}-2 \kappa_n \frac{n+2}{3}\frac{|\nabla A|^4}{g^2}.
\end{align*} 

\begin{align}
\label{eqn_evol}\frac{\partial}{\partial t}\left( \frac{|\nabla A|^2}{g^{2-\sigma}}\right)-\Delta \left( \frac{|\nabla A|^2}{g^{2-\sigma}}\right) &\leq \frac{2}{g^{1-\sigma}}\left \la \nabla g^{1-\sigma} ,\nabla \left( \frac{|\nabla A|^2}{g^{2-\sigma}}\right) \right \ra  \\
\nonumber&+\frac{1}{g^{1-\sigma}}\left (   c_n |A|^2 \frac{|\nabla A|^2}{g} + d _n \frac{ |\nabla A|^2}{g}-2 \kappa_n \frac{n+2}{3}\frac{|\nabla A|^4}{g^2}\right).
%&-2 \frac{|A|^2  |\nabla A|^2 g}{g g^2}.
\end{align}
The nonlinearity then is
\begin{align*}
\frac{|\nabla A|^2}{g^{2-\sigma}} \left( c_n|A|^2+ d_n -\frac{2 \kappa_n(n+2)}{3}\frac{|\nabla A|^2}{g}  \right).
\end{align*}
The quadratic curvature condition then bounds $ |A|^2 $ below away from zero, this can be seen from
\begin{align*}
|A|^2 &\leq \left (\frac{1}{n-1} - \eta\right) |H|^2 + (2- \e)K    \implies
\e K \leq \frac{1}{n-1}|H|^2 - |A|^2 + 2 K -\eta |H|^2 
\end{align*} 
so that $ g \geq \eta |H|^2 + \e K$, that there exists a constant $N$ so that   
\begin{align*}
Ng \geq c _n |A|^2 + d _n.
\end{align*}
Hence we have by the maximum principle, there exists a constant (with $\eta, \e $ chosen sufficiently small so that $N$ is sufficiently large that this estimate holds at the initial time) such that 
\begin{align*}
\frac{|\nabla A|^2}{g^{2-\sigma}}\leq \frac{3 N}{2 \kappa_n (n+2)}(\e K)^\sigma \leq \frac{3 N}{2 \kappa_n (n+2)}g^{\sigma}.  
\end{align*}
Therefore we see that there exists a constant $C = \frac{ 3N (\e K )^\sigma }{2 \kappa_n (n+2)}= C(n, \Sigma_0,\e,\sigma ) $ such that $\frac{|\nabla A|^2}{g^{2-\sigma}}\leq C$. 
If we let $ f = \frac{ | \nabla A| ^2 }{ g ^{2-\sigma}} $ then \eqref{eqn_evol} becomes
\begin{align*}
\partial_t f - \triangle f &\leq \frac 2 {g^{1-\sigma}} \langle \nabla g^{1-\sigma} , \nabla f \rangle  + f \left(  c_n | A|^2 + d _n - 2 \kappa_n \frac{ n+2}{3} \frac{ | \nabla A | ^2} {g }\right) 
 \end{align*}  
We then consider
\begin{align*}
\partial ( e^{ t\delta_0/2 } f ) = \frac{ \delta_0 }{2}e^{ t\delta_0/2 } f + e^{ t\delta_0/2 } \partial_t f &\leq \frac{ \delta_0}{2} e^{ t\delta_0/2 } f + e^{ t\delta_0/2 } \triangle f + \frac{2}{g} \left\langle \nabla g, \nabla (e^{ t\delta_0/2 }  f ) \right \rangle  \\
&+ e^{ t\delta_0/2 } f \left ( c_n |A|^2 + d_n - 2 \kappa_n \frac{n+2}{3} \frac{ |\nabla A| ^2}{ g} \right) 
\end{align*}
Recall that 
\begin{align*}
|A|^2 \leq \left( \frac{1}{n-1} - \eta \right)|H|^2 + (2 -\e) K 
\end{align*}
which implies 
\begin{align*}
\frac{1}{n-1} |H| ^ 2 - |A| ^2 + 2 K \geq \eta |H|^2 + \e K \leq c |A|^2 + d 
\end{align*}
for some $ c, d> 0$. This shows that there exists some $N>0$ such that
\begin{align*}
Ng \geq c_n |A|^2 + d_n + \frac{\delta_0}{2},
\end{align*}
and since $ e^{ t\delta_0/2 } \geq 1$ this implies
\begin{align*}
\frac{ | \nabla A|^2}{ g} e^{ t\delta_0/2 } \geq \frac{ |\nabla A|^2}{ g}
\end{align*} 
Hence we see that 
\begin{align*}
e^{ t\delta_0/2 } f &\left ( c_n |A|^2 + d_n + \frac{\delta_0}{2}- 2 \kappa_n \frac{n+2}{3} \frac{ |\nabla A| ^2}{ g} \right)  \leq 
e^{ t\delta_0/2 } f \left ( Ng - 2 \kappa_n \frac{n+2}{3} e^{ t\delta_0/2 } \frac{ |\nabla A| ^2}{ g} \right) \\
&\leq  e^{ t\delta_0/2 } gf \left ( N - 2 \kappa_n \frac{n+2}{3} e^{ t\delta_0/2 } \frac{ |\nabla A| ^2}{ g^2} \right).
\end{align*}
Hence we can choose $\e, \eta$ sufficiently small and applying the maximum principle we get 
\begin{align*}
e^{ t\delta_0/2 } \frac{ |\nabla A| ^2}{ g^{2-\sigma}} \leq \frac{ 3 N(\e K )^{\sigma}}{ 2 \kappa_n (n+2) }. 
\end{align*}

\end{proof}

We will also want to control the time derivative of curvature with constants with explicit dependence. In order to do so, we now derive quantitative estimates for the second derivative of curvature.   
\begin{theorem}
Let $ \mc M$ be a solution of the mean curvature flow then there exists constants $\gamma_3, \gamma_4, \delta_0$  depending only on the dimension and pinching constant so that 
\begin{align*}
|\nabla^2 A|^2 \leq (\gamma_3 |A|^6+\gamma_4)e^{-\delta_0/2 t}. 
\end{align*} 
\end{theorem}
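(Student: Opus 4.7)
The plan is to mimic the strategy of Theorem~\ref{thm_gradient} one order higher, now placing $|\nabla^2 A|^2$ in the numerator and a higher power of the pinching function $g = \frac{1}{n-1}|H|^2 - |A|^2 + 2K$ in the denominator. First I would derive the evolution equation for $|\nabla^2 A|^2$. Commuting covariant derivatives with $\nabla_{\partial_t}$, differentiating the reaction-diffusion equation for $\nabla A$ once more, and absorbing the lower-order background-curvature contributions, one gets an inequality of the schematic shape
\begin{equation*}
\partial_t |\nabla^2 A|^2 - \Delta |\nabla^2 A|^2 \leq -2|\nabla^3 A|^2 + c_n|A|^2|\nabla^2 A|^2 + c'_n|A||\nabla A|^2|\nabla^2 A| + d'_n|\nabla^2 A|^2.
\end{equation*}
The only structurally new feature compared to the evolution of $|\nabla A|^2$ is the cubic cross term $c'_n|A||\nabla A|^2|\nabla^2 A|$, which will be the source of the main difficulty.

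Next I would form $f = |\nabla^2 A|^2/g^{3-\sigma}$ for small $\sigma\in(0,1)$, using the same function $g$ and its evolution $\partial_t g \geq \Delta g + 2\kappa_n\frac{n+2}{3}|\nabla A|^2$ established in the previous theorem, together with the lower bound $g \geq \eta|H|^2 + \e K$ coming from the preserved pinching. The $w/z$-calculation already carried out in the proof of Theorem~\ref{thm_gradient}, combined with Kato's inequality
\begin{equation*}
\la \nabla g, \nabla |\nabla^2 A|^2 \ra \leq \tfrac{1}{g}|\nabla g|^2|\nabla^2 A|^2 + g|\nabla^3 A|^2,
\end{equation*}
causes the term $-2|\nabla^3 A|^2/g^{3-\sigma}$ to exactly cancel the bad quadratic gradient term, leaving a negative quartic reserve of the form $-C_\sigma |\nabla^2 A|^2/g \cdot f$ available for absorption.

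The hard part is the new cross term $c'_n|A||\nabla A|^2|\nabla^2 A|$, which has no analogue in the first-order estimate. I would split it by Young's inequality,
\begin{equation*}
c'_n|A||\nabla A|^2|\nabla^2 A| \leq \varepsilon \frac{|\nabla^2 A|^4}{g^2}\cdot g + C(\varepsilon)\, g|A|^2|\nabla A|^4,
\end{equation*}
absorbing the first piece into the good negative quartic reserve and using the already-established bound $|\nabla A|^4 \leq (\gamma_1|A|^4+\gamma_2)^2 e^{2\delta_0 t}$ from Theorem~\ref{thm_gradient} to dominate the second piece by $C(n,\Sigma_0)(|A|^{10}+1)g\, e^{2\delta_0 t}$. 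Since the preserved pinching gives $|A|^2 \lesssim g + 1$, this is controlled, after dividing by $g^{3-\sigma}$, by $Cf\,(|A|^6+1)e^{2\delta_0 t}$, which can in turn be absorbed into the standing term $Ng\cdot f$ by choosing $\eta,\e$ sufficiently small so that the inequality $Ng \geq c_n|A|^2 + d_n$ (upgraded to the order-six scale) holds at $t=0$, then propagated by a uniform-in-time exponentially weighted maximum principle.

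Finally, exactly as in Theorem~\ref{thm_gradient}, I would multiply through by an exponential temporal weight, verify that the remaining nonlinearity satisfies the correct sign once $f$ is large enough, and apply the parabolic maximum principle. This yields a uniform bound $f = |\nabla^2 A|^2/g^{3-\sigma} \leq C e^{\delta_0 t/2}$, and since $g \leq c|A|^2 + c'$ on any pinched solution, rewriting gives $|\nabla^2 A|^2 \leq (\gamma_3|A|^6 + \gamma_4)e^{\delta_0 t/2}$, matching the claimed estimate up to the convention on the sign of the exponent.
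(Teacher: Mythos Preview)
The paper states this theorem without proof, presenting it as a routine higher-order analogue of Theorem~\ref{thm_gradient}, which is precisely the route you take. Your overall strategy is the expected one.

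There is, however, a concrete gap in your identification of the ``negative quartic reserve.'' The good lower-order term in the evolution of $g$ is $+2\kappa_n\tfrac{n+2}{3}|\nabla A|^2$, a \emph{first}-derivative quantity. Running the $w/z$ computation with $w=|\nabla^2 A|^2$ therefore produces, through the $-\tfrac{w}{z^2}Z$ contribution, a reserve of the form
\begin{equation*}
-C\,\frac{|\nabla A|^2}{g}\cdot f, \qquad f=\frac{|\nabla^2 A|^2}{g^{3-\sigma}},
\end{equation*}
and not $-C\,\tfrac{|\nabla^2 A|^2}{g}\cdot f$ as you claim; the Kato step consumes $-2|\nabla^3 A|^2$ exactly and leaves nothing over. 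Since the first-order theorem supplies only an \emph{upper} bound on $|\nabla A|^2/g$, this reserve cannot be used to absorb the reaction term $c_n|A|^2 f\sim Ng\,f$, and it certainly cannot swallow the $\varepsilon\,|\nabla^2 A|^4/g$ piece your Young splitting of the cross term produces. As written, the absorption step does not close.

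The standard repair is to couple in the lower-order quantity rather than rely on $g$ alone: work with a combination such as $|\nabla^2 A|^2 + Mg\,|\nabla A|^2$ (or, after normalising, $f + M\,|\nabla A|^2/g^{2-\sigma}$) for $M$ large. The added term carries, in its own evolution, the genuinely good contribution $-2Mg\,|\nabla^2 A|^2$, which for $M\gg N$ absorbs the reaction $c_n|A|^2|\nabla^2 A|^2\le Ng\,|\nabla^2 A|^2$; the cross term is then handled by the ordinary Young inequality $|A||\nabla A|^2|\nabla^2 A|\le \tfrac12 g|\nabla^2 A|^2 + C|\nabla A|^4$ together with the first-order bound $|\nabla A|^4\lesssim g^4$. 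The residual gradient term $\langle\nabla g,\nabla|\nabla A|^2\rangle$ from the product rule is controlled by $|\nabla g|^2\lesssim g|\nabla A|^2$ and another Young inequality. With this modification your maximum-principle argument closes and yields $|\nabla^2 A|^2\lesssim g^3$, hence the stated estimate. Your final remark on the sign of the exponent is well taken: the paper's $e^{-\delta_0 t/2}$ here is inconsistent with the $e^{+\delta_0 t}$ appearing in Theorem~\ref{thm_gradient}.
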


As a special case of our estimates we get the following statement. 
\begin{corollary}\label{cor_gradient}
Let $ \mc M_t$ be a mean curvature flow. Then there exists $c^\#, H^\#,\delta_0>0$ such that for all $p \in \mc M $ and $t>0$ which satisfy
\begin{align*}
|H(p,t)| \geq H^\# \implies |\nabla H(p,t)| \leq c^\#e^{-\delta_0/2t} |H(p,t)|^2, \quad |\partial_t H(p,t)|\leq c^\#e^{-\delta_0/2} |H(p,t)|^3.
\end{align*}
\end{corollary}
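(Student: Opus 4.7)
The plan is to deduce the corollary as a direct consequence of Theorems \ref{thm_gradient} and its second-order analogue, read through the basic Kato-type gradient inequality and the quadratic pinching hypothesis, with $H^{\#}$ chosen large enough that all additive constants and subleading terms become absorbable into the leading power of $|H|$.

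First I would handle the estimate on $|\nabla H|$. Starting from the basic gradient inequality \eqref{eqn: basic grad est 1 sphere}, which rearranges to $|\nabla H|^2 \leq \frac{n+2}{3}|\nabla A|^2$, I apply Theorem \ref{thm_gradient} (reading the decay exponent $e^{-\delta_0 t/2}$ from the maximum-principle argument on $e^{t\delta_0/2} f$ rather than the sign appearing in the displayed statement) to get
\begin{equation*}
|\nabla H|^2 \leq \tfrac{n+2}{3}\bigl(\gamma_1 |A|^4 + \gamma_2\bigr) e^{-\delta_0 t/2}.
\end{equation*}
The pinching hypothesis $|A|^2 \leq c|H|^2 + (2-\e)\bar K$ gives $|A|^4 \leq 2c^2|H|^4 + 2(2-\e)^2 \bar K^2$, hence
\begin{equation*}
|\nabla H|^2 \leq C\bigl(|H|^4 + 1\bigr) e^{-\delta_0 t/2}
\end{equation*}
for a constant $C = C(n,\Sigma_0)$. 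I then choose $H^{\#}$ so that $|H| \geq H^{\#}$ forces the constant term to be dominated by $|H|^4$, yielding $|\nabla H|^2 \leq (c^{\#})^2 |H|^4 e^{-\delta_0 t/2}$, which is the first inequality after taking square roots and relabelling $\delta_0$.

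Next I would handle $\partial_t H$ via the evolution equation for the mean curvature derived in section \ref{s: The evolution equations in a sphere},
\begin{equation*}
\nabla_{\partial_t} H_\alpha = \Delta H_\alpha + H_\beta \cdot h_{ij\beta} h_{ij\alpha} + n\bar K H_\alpha.
\end{equation*}
The reaction term is dominated by $|H|\,|A|^2$, which by the pinching is $\leq C|H|^3$ when $|H| \geq H^{\#}$; the linear background term $n\bar K |H|$ is likewise dominated by $|H|^3$ in that regime. For $|\Delta H|$, since $H$ is the trace of $A$, one has $|\Delta H| \leq C|\nabla^2 A|$, and the second-order estimate asserted just above the corollary gives $|\nabla^2 A| \leq \sqrt{\gamma_3 |A|^6 + \gamma_4}\, e^{-\delta_0 t/4}$, which under the pinching and the threshold $|H| \geq H^{\#}$ is at most $C|H|^3 e^{-\delta_0 t/4}$. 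Combining these with a possibly larger $H^{\#}$ and a relabelled $\delta_0$ gives $|\partial_t H| \leq c^{\#} e^{-\delta_0 t/2} |H|^3$.

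The main obstacle here is not substantive but notational: one has to keep careful track of the exponent in the exponential decay (absorbing the factor of $2$ into the definition of $\delta_0$), and to choose $H^{\#}$ so that \emph{simultaneously} each of the additive constants $\gamma_2$, $\gamma_4$, and the subleading reaction/background terms is dominated by the appropriate power of $|H|$. Once the threshold and the common constants $c^{\#},\delta_0$ are pinned down, no further estimates are required.
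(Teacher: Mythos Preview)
Your proposal is correct and is precisely the argument the paper has in mind: the corollary is stated without proof, introduced only by ``As a special case of our estimates we get the following statement,'' and your derivation from Theorem~\ref{thm_gradient}, the second-order estimate, the basic gradient inequality~\eqref{eqn: basic grad est 1 sphere}, and the quadratic pinching is the intended route. Your care with the sign of the exponential (reading the decay from the maximum-principle step on $e^{t\delta_0/2}f$) and the absorption of additive constants via the threshold $H^{\#}$ is exactly what is needed.
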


Note that the following Lemma is purely a statement concerning submanifolds subject to gradient estimate for the mean curvature and is not concerned with mean curvature flow.
\begin{lemma}\label{lem_localHarnack}
Let $ F: \mc M^n \rightarrow \mathbb{S}^{n+k}$ be an immersed submanifold. Suppose there exists $ c^\#, H^\#, \delta_0>0 $ such that
\begin{align*}
|\nabla H(p)|^2 \leq c^{\#}e^{-\delta_0/2t}|H(p)|^2
\end{align*}
for any $ p\in \mc M $ such that $ |H|(p) \geq H^\#$. Let $ p_0\in \mc M$ satisfy $ |H(p_0)| \geq \gamma H^{\#}$ for some $ \gamma >1 $. Then
\begin{align*}
|H(q)| \geq \frac{|H(p_0)|}{1+c^{\#}e^{-\delta_0/2t}d ( p_0, q)|H(p_0)|}\geq \frac{|H(p_0)|}{\gamma},\quad \forall q \mid d(p_0,q) \leq \frac{\gamma-1}{c^{\#}e^{-\delta_0/2t}}\frac{1}{|H|(p_0) }.
\end{align*}

\end{lemma}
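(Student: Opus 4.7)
The proof is a standard ODE comparison along a minimising geodesic. Fix $q \in \mc M$ with $d(p_0, q) \leq \frac{\gamma - 1}{c^\# e^{-\delta_0 t/2} |H(p_0)|}$ and let $\xi : [0, L] \to \mc M$ be a unit-speed minimising geodesic from $p_0$ to $q$, where $L = d(p_0, q)$. Set $h(s) = |H(\xi(s))|$, so $|h'(s)| \leq |\nabla H|(\xi(s))$ wherever $h$ is differentiable.

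The plan is first to show, by an open-closed (bootstrap) argument, that $h(s) \geq H^\#$ throughout $[0, L]$. Define
\begin{equation*}
J = \{ s \in [0, L] \mid h(r) \geq H^\# \text{ for all } r \in [0, s] \}.
\end{equation*}
Since $h(0) = |H(p_0)| \geq \gamma H^\# > H^\#$, the set $J$ contains a neighbourhood of $0$, and it is closed by continuity of $h$. On $J$ the hypothesis (read in the scaling consistent with Corollary \ref{cor_gradient}, namely $|\nabla H| \leq c^\# e^{-\delta_0 t/2} |H|^2$) yields
\begin{equation*}
|h'(s)| \leq c^\# e^{-\delta_0 t/2} h(s)^2, \qquad \text{equivalently} \qquad \left| \frac{d}{ds} \frac{1}{h(s)} \right| \leq c^\# e^{-\delta_0 t/2}.
\end{equation*}
Integrating from $0$ to $s \in J$ and rearranging produces the sought estimate
\begin{equation*}
h(s) \geq \frac{|H(p_0)|}{1 + c^\# e^{-\delta_0 t/2} s \, |H(p_0)|}.
\end{equation*}

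For $s \leq L$, the smallness hypothesis on $L$ makes the denominator at most $\gamma$, and hence $h(s) \geq |H(p_0)|/\gamma \geq H^\#$ on $J$. This uniform lower bound keeps us away from the threshold $H^\#$, so $J$ is also open in $[0, L]$; by connectedness $J = [0, L]$. Evaluating the displayed estimate at $s = L = d(p_0, q)$ yields both inequalities in the statement. The only delicate point in the argument is the bootstrap itself: because the gradient estimate is only known to hold where $|H| \geq H^\#$, one must verify that $|H|$ never drops to this threshold along $\xi$, and this is precisely what the smallness of $d(p_0, q)$ buys through the self-improving ODE inequality above.
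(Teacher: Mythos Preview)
Your proof is correct and is exactly the standard ODE/bootstrap argument that the paper defers to by citing \cite{Huisken2009} and \cite{Nguyen2018a}; the paper gives no further details. Your reading of the hypothesis as $|\nabla H| \leq c^{\#}e^{-\delta_0 t/2}|H|^2$ (consistent with Corollary~\ref{cor_gradient}) rather than the dimensionally mismatched $|\nabla H|^2 \leq c^{\#}e^{-\delta_0 t/2}|H|^2$ printed in the lemma is the right one, since otherwise the conclusion does not follow.
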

\begin{proof}
The proof is essentially that of \cite{Huisken2009} or \cite{Nguyen2018a}. 
\end{proof}

Finally we have the following
\begin{theorem}\label{thm_spherical}
Let $ F:[0,T)\times \mc M^n \rightarrow \mathbb{S}^{n+k}\subset \R^{n+k+1}$ be a smooth solution to the mean curvature flow such that $ F_0(p) = F(0,p)$ is compact and quadratically bounded. Then for all $ \e >0$ there is a $H_0>0$ such that if $ |H(p,t)| \geq H_0$ then 
\begin{align*}
\frac{|A|^{2}}{|H|^2}\leq \left (\frac{1}{n}+\e  \right).
\end{align*} 
and a $T_0>0$ such that if $ t > T_0$ 
\begin{align*}
|A|^2 \leq \e. 
\end{align*}
\end{theorem}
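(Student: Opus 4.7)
I would argue both statements by contradiction combined with a parabolic rescaling, relying on the pointwise gradient estimate of Theorem \ref{thm_gradient} and the local Harnack of Lemma \ref{lem_localHarnack} to pass to a smooth limit. For the first assertion, suppose there exist $\e > 0$ and sequences $(p_k, t_k)$ with $L_k := |H(p_k, t_k)| \to \infty$ while $|A(p_k, t_k)|^2 \geq (\tfrac{1}{n} + \e)L_k^2$. Rescale parabolically by $L_k$ and translate in $\R^{n+k+1}$ so that $F(p_k, t_k)$ is placed at the origin; the rescaled flows $\tilde F_k$ then live in spheres of background curvature $\bar K/L_k^2 \to 0$, and the pinching established in Section \ref{s: Preservation of curvature pinching} transfers to $|\tilde A_k|^2 \leq c|\tilde H_k|^2 + \beta \bar K L_k^{-2}$ with the background contribution vanishing in the limit.

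By Corollary \ref{cor_gradient} applied to the original flow together with Lemma \ref{lem_localHarnack}, the mean curvature remains comparable to $L_k$ on an intrinsic ball of radius $\sim L_k^{-1}$ around $p_k$, which rescales to a ball of fixed radius in $\tilde F_k$. The pinching then bounds $|\tilde A_k|$ on this ball, and the gradient estimate combined with the subsequent second-derivative bound supplies uniform $C^2$ control, iterated to higher regularity by Shi-type estimates. A diagonal Arzel\`a--Ascoli argument extracts a subsequential smooth limit mean curvature flow $\tilde F_\infty$ in $\R^{n+k+1}$ with $|\tilde H_\infty(0,0)| = 1$. Since $|\tilde\nabla \tilde A_k|^2 \leq C L_k^{-2} \to 0$, the limit has parallel second fundamental form, and the preserved strict pinching $|\tilde A_\infty|^2 \leq c|\tilde H_\infty|^2$ excludes Clifford-type obstructions, forcing $\tilde F_\infty$ to be a round $n$-sphere $\mbb S^n \subset \R^{n+1}$. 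For that sphere $|\tilde A_\infty|^2/|\tilde H_\infty|^2 = 1/n$, which contradicts the rescaled assumption.

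For the second assertion, I treat the case $T = \infty$. The first assertion implies that $|H|$ cannot become unbounded along any sequence $t_k \to \infty$, for such a blow-up would furnish the round sphere limit above and hence a finite-time singularity, contradicting $T = \infty$. Thus $|H|$ is uniformly bounded, pinching controls $|A|$ similarly, and the estimates of Section \ref{s:gradientEstimate} yield uniform $C^\infty$ bounds. Evaluating \eqref{e: evol h2 sphere} at a spatial maximum of $|A|^2$ and applying the estimate $R_1 \leq \tfrac{3}{2}|A|^4$ of \cite{Li1992} gives
\begin{align*}
\frac{d}{dt}|A|^2_{\max} \leq 3|A|^4_{\max} + 4\bar K |H|^2_{\max} - 2n\bar K |A|^2_{\max}.
\end{align*}
Combined with pinching $|A|^2 \leq c|H|^2 + \beta \bar K$, where $3\beta < 2n$ in all the dimensional cases $\beta\in\{n/2,2\}$, the reaction is strictly negative once $|H|$ is sufficiently small. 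A blow-up around a putative sequence $t_k \to \infty$ with $|A|(p_k, t_k) \geq \e$ would yield an eternal solution in $\mbb S^{n+k}$ on which the above differential inequality is incompatible with the maximum principle, forcing $|A|$ and thereby $|H|$ to decay uniformly to zero.

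The principal obstacle I anticipate is the classification step in the first assertion: identifying the rescaled limit as a round sphere requires ruling out exotic high-codimension examples of complete submanifolds with parallel second fundamental form satisfying the strict pinching, such as Veronese-type immersions or generalised Clifford products of spheres. This demands showing that the strict pinching inequality inherited from the initial data is sufficiently stringent to exclude all non-spherical candidates, likely by the case analysis of principal normal directions afforded by the \cite{Andrews2010} frame decomposition used in Section \ref{s: Preservation of curvature pinching}.
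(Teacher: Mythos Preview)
Your central claim in the first assertion --- that the rescaled gradient satisfies $|\tilde\nabla \tilde A_k|^2 \leq C L_k^{-2} \to 0$ --- is incorrect, and this is the main gap. Under parabolic rescaling by $L_k$ one has $|\tilde\nabla\tilde A_k|^2 = L_k^{-4}|\nabla A|^2$, while Theorem~\ref{thm_gradient} gives only $|\nabla A|^2 \lesssim (|A|^4 + 1)e^{-\delta_0 t_k/2}$. At a finite-time singularity $t_k \to T < \infty$, so the exponential factor stays bounded away from zero, and since $|A|^4 \sim L_k^4$ near the blow-up point you get merely $|\tilde\nabla\tilde A_k|^2 \lesssim 1$: bounded, but not decaying. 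The gradient estimate is essentially scale-invariant and cannot by itself force the limit to have parallel second fundamental form. The paper instead extracts a limit \emph{flow} (not a static object) on which the ratio $|A|^2/|H|^2$ attains a space-time interior maximum at the origin; it then applies the strong maximum principle to the evolution equation for $|A|^2/|H|^2$, which is what forces the ratio constant. Reading off the evolution equation at constancy yields simultaneously $|\nabla A|^2 = 0$ \emph{and} $|\Acirc_-|^2 = 0$. The second conclusion reduces the limit to codimension one, so Lawson's hypersurface classification applies directly and the high-codimension classification obstacle you flag at the end never arises.

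Your treatment of the second assertion also has gaps. The inference ``a blow-up would furnish the round sphere limit above and hence a finite-time singularity'' is not valid: a spherical rescaled limit does not by itself contradict $T = \infty$. The paper rules out $T = \infty$ with $|H|_{\max} \to \infty$ by a separate argument: the exponential gradient decay gives $|\nabla H| \leq \eta |H|_{\max}^2$ with $\eta$ arbitrarily small for large $t$, which propagates a lower bound $|H|_{\min} \geq \delta |H|_{\max}$ via a Bonnet--Myers diameter estimate; one then shows $Q = |H|^2 - \tfrac{3n}{4}|A|^2 - b(t) \geq 0$ is preserved for a $b(t)$ satisfying $b' \leq \tfrac{8}{n}b^2$, forcing finite extinction. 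In the genuinely bounded case the paper takes time translations (no rescaling), and here the exponential decay $|\nabla A|^2 \leq Ce^{-\delta_0 t/2}$ \emph{does} force the limit to be static with parallel $A$; the vanishing of the reaction term in \eqref{e: pinch lem sphere 1} on a minimal limit then gives $|A|^2 \to 0$. Your differential-inequality sketch at the maximum of $|A|^2$ does not substitute for this, since you have not established that $|H|$ is small, only bounded.
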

\begin{proof} 
The proof essentially follows that of \cite{Nguyen2018a}. Here we sketch the argument and point out the differences in the case of the sphere. Let us first consider the following case 
\begin{align*}
\lim_{t\rightarrow T_{\max}}\sup_{\mc M_t}|A(p,t) |^2 =+\infty.
\end{align*}
Furthermore, since 
$\frac{1}{n}|H|^2< |A|^2 < \frac{1}{n-1}| H|^2 + 2 K$, the second fundamental form $A$ and the mean curvature $H$ have the same blow up rate, so we must have 
\begin{align*}
\lim_{t\rightarrow T_{\max}}\sup_{\mc M_t}|H(p,t) |^2 =+\infty.
\end{align*}
Suppose the estimate is not true. Then there exists an $\e>0$ where we have 
\begin{align*}
\limsup_{t\rightarrow T_{\max}}\sup_{p \in \mc M_t}\frac{|A(p,t) |^2}{|H(p,t) |^2}= \frac{1}{n}+ \e > \frac{1}{n}.
\end{align*}
Furthermore there exists a sequence of points $ p_k $ and times $ t_k $ such that as $ k \rightarrow \infty $, $ t_k \rightarrow T_{\max}$  
and 
\begin{align*}
\lim_{k\rightarrow \infty}\frac{|A(p_k, t_k ) |^2}{|H(p_k, t_k ) |^2}= \frac{1}{n}+ \e .
\end{align*}
We perform a parabolic rescaling of $ \bar M_t^k $ in such a way that the norm of the mean curvature at $(p_k,t_k)$ becomes $n-1$. That is, if $F_k$ is the parameterisation of the original flow $ \mc M_t^k $, we let $ \hat r_k = \frac{n-1}{|H(p_k,t_k)|}$, and we denote the rescaled flow by $ \mc M_t^k $ and we define it as 
\begin{align*}
\bar F_k (p,\tau) = \frac{1}{\hat r_k}( F_k ( p , \hat r_k^2 \tau+t_k)-F_k (p_k , t_k) )
\end{align*}  

For simplicity, we choose for every flow a local co-ordinate system centred at $ p_k$. In these co-ordinates we can write $0$ instead of $ p_k$. The parabolic neighbourhoods $\mc P^k ( p_k, t_k, \hat r_k L, \hat r_k^2 \theta)$ in the original flow becomes $ \bar{\mc P}(0,0,\theta, L)$. By construction, each rescaled flow satisfies 
\begin{align*}
\bar F_k (0,0) = 0, \quad |\bar H_k (0,0) | = n-1. 
\end{align*}
The gradient estimates give us uniform bounds on $ |A|$ and its derivatives up to any order on a neighbourhood of the form $\bar{\mc P}( 0 ,0,d,d)$ for a suitable $ d > 0$. This gives us uniform estimates in $ C^\infty $ on $ \bar F_k$. Hence we can apply Arzela-Ascoli and conclude that there exists a subsequence that converges in $ C^\infty $ to some limit flow which we denote by $ \tilde M_\tau^\infty$. We now analyse the limit flow $ \tilde M_\tau^\infty\subset \mathbb{R}^{n+k}$. Note that we have 
\begin{align*}
\bar A_k ( p , \tau ) = \hat r_k A_k ( p , \hat r_k^2 \tau+t_k ). 
\end{align*}
so that
\begin{align*}
\frac{|\bar A_k(p, \tau) |^2}{|H_k(p,\tau ) |^2}& = \frac{|A_k ( p, \hat r_k^2 \tau+t_k ) |^2}{|H_k(p, \hat r_k^2 \tau+t_k ) |^2} 
\end{align*}
but since $ \hat r_k \rightarrow 0, t_k \rightarrow T_{\max}$ as $ k\rightarrow \infty $ this implies 
\begin{align*}
\frac{|\bar A ( p , \tau ) |^2}{| \bar  H (p, \tau ) |^2}=\lim_{k\rightarrow \infty}\frac{|\bar A_k(p,\tau)|^2}{|\bar H_k ( p, \tau ) |^2} \leq \frac{1}{n} +\e \quad \text{and} \quad  \frac{| \bar A(0,0)|^2}{| \bar H(0,0)|^2}= \frac{1}{n}+\e.
\end{align*}
Hence the flow $\bar{\mc M}_t^\infty $ has a space-time maximum for $\frac{|\bar A ( p , \tau ) |^2}{| \bar  H (p, \tau ) |^2}$ at $ (0,0)$. Since the evolution equation for $ \frac{|A|^2}{|H|^2}$ is given by 
\begin{align*}
\partial_t \left ( \frac{|A|^2}{|H|^2}\right)-\triangle \left (\frac{|A|^2}{|H|^2}\right) & = \frac{2}{|H|^2}\left \la \nabla |H|^2 , \nabla \left ( \frac{|A|^2}{|H|^2}\right) \right \ra-\frac{2}{|H|^2} \left ( |\nabla A|^2-\frac{|A|^2}{|H|^2}|\nabla H|^2 \right) \\
&+\frac{2}{|H|^2}\left( R_1-\frac{|A|^2}{|H|^2} R_2\right)
\end{align*}
Now
\begin{equation*}
|\nabla H|^2 \leq \frac{3}{n+2}|\nabla A|^2, \quad  \frac{|A|^2}{|H|^2}\leq c_n \implies -\frac{2}{|H|^2} \left ( |\nabla A|^2-\frac{|A|^2}{|H|^2}|\nabla H|^2 \right) \leq 0.
\end{equation*}
Furthermore if $\frac{|A|^2}{|H|^2}=c <  c_n$ then 
\begin{align*}
 R_1-\frac{|A|^2}{|H|^2} R_2&=  R_1-c  R_2\\
 &\leq \frac{2}{n}\frac{1}{c-\nicefrac{1}{n}}| A_-|^2 \mc Q+\left(6-\frac{2}{n (c-\nicefrac{1}{n})}  \right) |\circo A_1|^2 | \circo A_-|^2+\left(3-\frac{2}{n (c-\nicefrac{1}{n})}  \right)|\circo A_-|^4\\ 
 &\leq 0.
\end{align*}
Hence the strong maximum principle applies to the evolution equation of $\frac{|A|^2}{|H|^2}$ and shows that $\frac{|A|^2}{|H|^2}$ is constant. The evolution equation then shows that $ |\nabla A|^2= 0$, that is the second fundamental form is parallel and that $|A_-|^2 = |\circo A_-|^2=0$, that is the submanifold is codimension one. Finally this shows that locally $ \mc M = \mbb S^{n-k}\times \R^k$, \cite{Lawson1969}. As $\frac{|A|^2}{|H|^2}< c_n\leq  \frac{1}{n-1}$ we can only have $\mbb S^n$ which gives $\frac{|A|^2}{|H|^2}= \frac{1}{n}$ which is a contradiction. 

Next we consider the case where $ T_{\max}= + \infty$. Firstly we rule out the possibility that $\lim_{t\rightarrow\infty} |H_{\max}| = +\infty$. Therefore let us assume  $\lim_{t\rightarrow\infty} |H_{\max}| = +\infty$.
%From Corollary \ref{cor_gradient}, we know for any $\eta > 0$ there exists a constant $C_{\eta}$ such that $\abs{\nabla H} \leq \eta\abs{H}^{2} + C_{\eta}$ on $0 \leq t < T$.  We highlight that at this stage, $T$ could be infinite. 

Since by assumption $\abs{H}_{\text{max}} \rightarrow \infty$ as $t \rightarrow \infty$, there exists a $\tau(\eta)$ such that $e^{\delta_0 t/2}<\eta$ for all $\tau \leq t < \infty$.  Thus $\abs{\nabla H} \leq \eta\abs{H}^{2}_{\text{max}}$ for all $t \geq \tau$.  Fix some $\delta \in(0,1)$ and set $\eta = \frac{\delta(1-\delta)\varepsilon}{\pi}$.  Let $t\in [\tau(\eta),\infty)$, and $x$ be a point with $\abs{H}(x)=\abs{H}_\text{max}$.  Along any geodesic of length $\frac{\pi}{\varepsilon\delta H_\text{max}}$ from $x$, we have $\abs{H}\geq \abs{H}_\text{max}-\frac{\pi}{\varepsilon\delta \abs{H}_\text{max}}\eta\abs{H}^2_\text{max}=\delta\abs{H}_\text{max}$, and consequently the sectional curvatures satisfy $K\geq \varepsilon^2\delta^2\abs{H}_\text{max}^2$. From Bonnet's Theorem it follows that $\diam \Sigma \leq \frac{\pi}{\varepsilon\delta H_\text{max}}$, from which we conclude that $\abs{H}_\text{min}\geq \delta\abs{H}_\text{max}$ on the whole of $\Sigma_t$ for $t\in [\tau(\eta),T)$. 

The previous line shows that by choosing $\tau$ sufficiently large, $\abs{H}_{\text{min}}$ can be made arbitrarily large.  It follows from the above argument that after some sufficiently large time the submanifold is as pinched as we like (and in particular can be made to satisfy $\abs{A}^2 < 1/(n-1)\abs{H}^2$ in dimensions $n \geq 4$ and $\abs{A}^2 < 4/(3n)\abs{H}^2$ in dimensions $2 \leq n \leq 4$).  We now show that once the submanifolds are pinched as such, the maximal time of existence must be finite.  Define $Q=\abs{H}^2 - a\abs{A}^2 - b(t)$, where $a=\frac{3n}{4}$ and $b$ is some time-dependent function.  Because $\abs{H}_{\text{min}} > 0$ and the submanifolds are as pinched as we like, for some sufficiently large time $\tau$ we can choose a $b(\tau) = b_{\tau} >0$ such that $Q\geq 0$ for $t = \tau$.  The evolution equation for $\mathcal{Q}$ is
\begin{align*}
	\frac{\p}{\p t}Q &= \Delta Q - 2(\abs{\nabla H}^2 - a\abs{\nabla A}^2) + 2R_2 - 2aR_1 + 2(n-a)\bar{K}\abs{\ho}^2 + 2an\bar{K}\abs{H}^2 - b'(t) \\
	&\geq \Delta Q - 2(\abs{\nabla H}^2 - a\abs{\nabla A}^2) + 2R_2 - 2aR_1 - b'(t).
\end{align*}
Estimating the reaction terms as before we obtain
\begin{align*}
&2R_2-2aR_1-b'(t) \\
	&\quad= \sum_{i,j} \left( \sum_{\alpha} H_{\alpha}h_{ij\alpha} \right)^2 - 2a\sum_{\alpha, \beta} \bigg( \sum_{i,j} h_{ij\alpha}h_{ij\beta} \bigg)^2 - 2a\abs{\Rp}^2 - b'(t) \\
	&\quad \geq  2\abs{\ho_1}^2(a\abs{\ho_1}^2 + a\abs{\ho_-}^2 + b) + \frac{2}{n(1-a/n)}(a\abs{\ho_-}^2 + b)(a\abs{\ho_1}^2 + a\abs{\ho_-}^2 + b) \\
	&\quad - 2a\abs{\ho_1}^4 - 8a\abs{\ho_1}^2\abs{\ho_-}^2 - 3a\abs{\ho_-}^4 - b'(t).
\end{align*}
Equating coefficients, we find $Q \geq 0$ is preserved if
$\frac{db}{dt} \leq \frac{8b^2}{n}$. We can therefore take
\begin{equation*}
	b(t) = \frac{nb_0}{n-8b_0(t - \tau)}.
\end{equation*}
This is unbounded as $t \rightarrow \tau +\frac{n}{8b_0}$, so we must have $T \leq \tau +\frac{n}{8b_0}$. 

Finally we need to consider the case where $T_{\max}=\infty$. Since $\abs{A}_{\text{max}}$ is bounded, by Theorem \ref{thm_gradient} we have the estimate
\begin{equation}\label{e: grad H bounded est}
	\abs{\nabla A}^2 \leq Ce^{-(\delta_0/2) t}.
\end{equation}
By considerings translations in time $(x,t)\mapsto(x,t-T)$ we can therefore extract a convergent subsequence which will independent of the $T$'s approaching infinity. Furthermore this is a static solution to the mean curvature flow and hence a minimal submanifold, that is $\lim_{t\rightarrow \infty} |H| =0$ and \eqref{e: grad H bounded est} tells us that this has parallel second fundamental form. But since the limit submanifold is static, this means that the nonlinearity in \eqref{e: pinch lem sphere 1} must be zero but this can only happen if $\lim_{t\rightarrow \infty}|A|^2=0$ as required.  
%In the case of a spherical background, the Chen's estimate combined with our pinching condition gives
%\begin{equation}
%	K_{\text{min}}(x) \geq \frac{1}{2}\left( \frac{1}{n-1} - \alpha_{\epsilon} \Big )\abs{H}^2(x) + \frac{(2 - \beta_{\epsilon})\bar{K}}{2}. \label{e: Kmin pos}
%\end{equation}
%hence we know smallest sectional curvature is positive, so from Bonnet's Theorem it follows that the diameter of $\Sigma_t$ is bounded.  Using this fact, integrating the second estimate of \eqref{e:  grad H bounded est} along geodesics gives
%\begin{equation}\label{e: Hmin - Hmax bounded}
%	\abs{A}_{\text{max}} - \abs{A} \leq Ce^{-(\delta_0/2) t}.
%\end{equation}
%Now it follows that $\abs{A}_{\text{max}} \leq Ce^{-(\delta_0/2) t}$ and consequently $ \abs{A}^2 \leq C_0e^{-\delta_0 t}$
\end{proof}

We now have all the necessary estimates in place to repeat the convergence arguments of \cite{Andrews2010} to obtain smooth convergence of the submanifolds to a totally geodesic submanifold.

%Since we know now that asymptotically the quotient of the second fundamental form and mean curvature approaches $\frac{1}{n}$ at a finite time singularity, we can in fact show the following  
%\begin{corollary} \label{cor_gradient}
%For each $\eta > 0$ there exists a constant $C_{\eta}$ depending only on $\eta$ such that for all time the estimate
%	\begin{equation*}
%		\abs{\nabla A}^2 \leq ( \eta \abs{A}^4 + C_{\eta} ) e^{-\delta_0 t/2}
%	\end{equation*}
%holds.
%\end{corollary} 

\section{Mean Curvature Flow of Codimension Two Surfaces in $\mbb S^4$}\label{s:twoSurfaces}
In the case of surfaces in $\mbb{S}^4$ we consider instead the pinching quantity $\abs{A}^2 +2\gamma|K^{\bot}| \leq k\abs{H}^2+\e\bar{K}$ where $\gamma$ and $\e$ will be determined. This is the first step of the proof of Theorem \ref{thm:MainThm2}.

\subsection{Evolution of normal curvature}
In this section we compute the evolution equation for the normal curvature. The normal curvature tensor in local orthonormal frames for the tangent $\{e _i  : i = 1,2 \}$ and normal $\{\nu _ \alpha : \alpha = 1,2 \}$ bundles is given by 
\begin{equation} \label{evol_normal}
R ^{\perp} _{ij\alpha\beta} = h _{i p \alpha} h _{jp \beta} - h _{jp \alpha} h _{ip \beta}.
\end{equation}
We often compute in a local orthonormal normal frame $\{\nu _ \alpha : \alpha = 1,2 \}$ where $\nu_1 = \nicefrac{H}{| H|}$. As the normal bundle is two dimensional $ \nu_2 $ is then determined by $ \nu _ 1 $ up to sign. With this choice of frame the second fundamental form becomes
\begin{align}\label{eqn_traceless} 
\left \{
\begin{array}{cc} 
\circo A _ 1 = A _ 1 - \frac{|H|}{n} Id \\
\circo A_ 2 = A_2
\end{array}
\right.
\quad 
\&\quad 
\left \{
\begin{array}{cc}
\tr A_1 = | H| \\
\tr A_2 = 0.
\end{array}
\right.
\end{align}
It is also always possible to choose the tangent frame $\{e _i  : i = 1,2 \}$ to diagonalise $A_1$. We often refer to the orthonormal frame $ \{e_1, e_2, e_3, e_4\} = \{e_1, e_2, \nu_1, \nu _2\}$, where $\{e_i\}$ diagonalises $A_1$ and $\nu_1 = H/|H|$, as the `special orthonormal frame'.
Codimension two surfaces have four independent components of the second fundamental form, which still makes it tractable to work with individual components, similar to the role of principal curvatures in hypersurface theory. Working in the special orthonormal frame, we often find it convenient to represent the second fundamental form by
\begin{align}\label{eqn_split}
h_{ij} = \left [
\begin{array}{cc}
 \frac{|H|}{2} + a  & 0 \\
 0 & \frac{|H|}{2} - a 
\end{array}
  \right]\nu_1
  + \left[ 
  \begin{array}{cc} 
  b & c \\
  c & -  b
  \end{array}
  \right]\nu_2,
\end{align}
so that $h_{111} = |H|/2 +a$, $h_{221} = |H|/2-a$, $h_{112} = b$, $h_{122} = c$ and so on. Note that $|\Acirc|^2 = 2a^2 + 2b^2 +2c^2$.

Just as a surface has only one sectional curvature $K$, a codimension two surface also has only one normal curvature, which we denote by $ K ^ \perp$. In the special orthonormal frame the normal curvature is
\begin{equation}\label{eqn:normalCurv}
\begin{split}
K^\perp = R^{\perp}_{1234} & = \sum_{p}\left(h_{1p\3}h_{2p\4} - h_{2p\3}h_{1p\4}\right)\\
& = h_{11\3}h_{21\4} - h_{21\3}h_{11\4} +h_{12\3}h_{22\4} - h_{22\3}h_{12\4}\\
%& = (h_{11\3} - h_{22\3})h_{12\4}+ h_{12\3}( h _{22\4}- h_{11\4 }) \\
& = 2 a c.
\end{split}
\end{equation}
Note also that $| \Rp | ^ 2 = 16 a ^ 2 c ^ 2 $. The evolution equation of the normal curvature is given by 
\begin{align*}
\frac{\partial}{\partial t} R ^{\perp} _{ij\alpha \beta} &= \Delta R ^{\perp} _{ij \alpha \beta} - 2 \sum _{p,r}\left ( \nabla_q h _{ip\alpha} \nabla _ q h _{jp\beta} - \nabla _ q h _{jp \alpha} \nabla _ q h _{i p \beta}\right ) \\
 &\quad +\sum_{p} \left (\frac{d}{dt} h _{i p \alpha} h _{j p \beta} + h _{i p \alpha} \frac{d}{dt} h _{j p \beta} - \frac{d}{dt} h _{j p \alpha} h _{i p \beta} - h _{jp \alpha} \frac{d}{dt}h_{i p \beta}\right)\\
& - n \bar K R_{ij\alpha \beta}^\perp\\
\end{align*}
or 
\begin{equation}\label{eqn:evolNormalCurv}
\begin{split}
\frac{\partial}{\partial t}   R ^{\perp} _{ij\alpha \beta} & = \Delta R ^{\perp} _{ij \alpha \beta} - 2 \sum _{p,r}\left ( \nabla_q h _{ip\alpha} \nabla _ q h _{jp\beta} - \nabla _ q h _{jp \alpha} \nabla _ q h _{i p \beta}\right ) \\
&\quad + \sum (h _{ip \gamma} \cdot h_{rq\gamma} h _{rq\al} + h _{i q \gamma} \cdot h _{q r \gamma} h _{rp \al}+ h _{p q\gamma} \cdot h_{qr\gamma} h _{ri \al} - 2 h _{ir \gamma} \cdot h _{pq\gamma} h _{rq\al})h_{jp\beta}\\
&\quad+ \sum h_{ip\alpha}(h _{jp \gamma} \cdot h_{rq\gamma} h _{rq\beta} + h _{j q \gamma} \cdot h _{q r \gamma} h _{rp \beta}+ h _{p q\gamma} \cdot h_{qr\gamma} h _{rj \beta} - 2 h _{jr \gamma} \cdot h _{pq\gamma} h _{rq\beta}) \\
&\quad-\sum ( h _{jp \gamma} \cdot h_{rq\gamma} h _{rq\al} + h _{j q \gamma} \cdot h _{q r \gamma} h _{rp \al}+ h _{p q\gamma} \cdot h_{qr\gamma} h _{rj \al} - 2 h _{jr \gamma} \cdot h _{pq\gamma} h _{rq\al}) h_{ip \beta}\\
&\quad - \sum h_{jp\al}(h _{ip \gamma} \cdot h_{rq\gamma} h _{rq\beta} + h _{i q \gamma} \cdot h _{q r \gamma} h _{rp \beta}+ h _{p q\gamma} \cdot h_{qr\gamma} h _{ri \beta} - 2 h _{ir \gamma} \cdot h _{pq\gamma} h _{rq\beta} )\\
&- n \bar K R_{ij\alpha \beta}^\perp\\
&=\Delta R ^{\perp} _{ij \alpha \beta} - 2 \sum _{p,r}\left ( \nabla_q h _{ip\alpha} \nabla _ q h _{jp\beta} - \nabla _ q h _{jp \alpha} \nabla _ q h _{i p \beta}\right ) \\
&\quad+R_{ip\alpha}h_{jp\beta} + h_{ip\alpha}R_{jp\beta} - R_{jp\alpha}h_{ip\beta} - h_{jp\alpha}R_{ip\beta}- n \bar K R_{ij\alpha \beta}^\perp\\
\end{split}
\end{equation}
Computing in the special orthonormal frame and denoting the reaction terms by $ \frac{d}{dt}  K ^ \perp$, the nonlinearity for codimension two surfaces simplifies to
\begin{align*}
 \frac{d}{dt} K ^ \perp &= 4 ac \left(  \left ( \frac{| H|}{2} - a \right)^ 2 - \left (\frac{|H|}{2} + a  \right) \left ( \frac{| H|}{2} - a \right) + 2 b ^ 2 + 3 c ^ 2 +\left ( \frac{|H|}{2}  + a \right) ^ 2\right) \\
  &= K^{\perp}\left( |A|^2 + 2|\Acirc|^2 - 2b^2 \right)-2\bar K K^\perp. 
\end{align*}
For notational convenience we set
\[ \nabla_{\!\!evol}K^{\perp} := \sum _{p,q}\left ( \nabla_q h _{ip\alpha} \nabla _ q h _{jp\beta} - \nabla _ q h _{jp \alpha} \nabla _ q h _{i p \beta}\right ) \quad 
\text{and} \quad R_3 := K^{\perp}\left( |A|^2 + 2|\Acirc|^2 - 2b^2 \right). \]
Substituting the simplifed nonlinearity into \eqref{eqn:evolNormalCurv} we obtain the evolution equation for the normal curvature
\[ \frac{\p }{\p t} K^{\perp}= \Delta K^{\perp} - 2 \nabla_{\!\!evol}K^{\perp} + K^{\perp}\left( |A|^2 + 2|\Acirc|^2 - 2b^2 \right)-2\bar K K^\perp,\]
and a little more computation shows the length of the normal curvature evolves by
\[ \frac{\p }{\p t} |K^{\perp}|= \Delta |K^{\perp}| - 2 \frac{K^{\perp}}{|K^{\perp}| }\nabla_{\!\!evol}K^{\perp} + |K^{\perp}|\left( |A|^2 + 2|\Acirc|^2 - 2b^2 \right) - 2 \bar K |K^\perp|. \]
We remark that the complicated structure of the gradient terms prevents an application of the maximum principle to conclude flat normal normal bundle is preserved.

\begin{proposition}\label{prop_grad}
We have the following gradient estimates:
\begin{subequations}
\begin{align}
|\nabla A | ^ 2 &\geq \frac {3}{n+2} | \nabla H | ^ 2  \label{eqn_gradient1} \\
|\nabla A | ^ 2 - \frac {1} {n} |\nabla H| ^ 2 &\geq \frac {2 ( n-1)}{3 n} | \nabla A | ^ 2 \label{eqn_gradient2}\\
|\nabla A| ^ 2 & \geq  2 \nabla_{\!\!evol} K^{\perp} \quad \text{if $ n =2 $}. \label{eqn_gradient3}
\end{align}
\end{subequations}
\end{proposition}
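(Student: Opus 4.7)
The plan is to establish \eqref{eqn_gradient1} via the classical refined Kato-type decomposition, derive \eqref{eqn_gradient2} as an immediate algebraic consequence, and prove the new estimate \eqref{eqn_gradient3} by a component-wise calculation in the special orthonormal frame exploiting the Codazzi symmetry of $\nabla h$. For \eqref{eqn_gradient1}, I would write
\begin{equation*}
\nabla_i h_{jk\alpha} = E_{ijk\alpha} + \tfrac{1}{n+2}\bigl(g_{ij}\nabla_k H_\alpha + g_{ik}\nabla_j H_\alpha + g_{jk}\nabla_i H_\alpha\bigr),
\end{equation*}
with $E$ defined by the equation. Since the sphere is a space form, Codazzi makes $\nabla h$ totally symmetric in its three tangent indices, hence so is $E$; a direct check shows $E$ is also trace-free in every pair of tangent indices. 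The two pieces are therefore pointwise $L^2$-orthogonal and Pythagoras collapses to $|\nabla A|^2 = |E|^2 + \frac{3}{n+2}|\nabla H|^2$, from which \eqref{eqn_gradient1} follows upon discarding $|E|^2 \geq 0$. Inequality \eqref{eqn_gradient2} is then immediate, since it rearranges to $\frac{n+2}{3n}|\nabla A|^2 \geq \frac{1}{n}|\nabla H|^2$, i.e.\ to \eqref{eqn_gradient1}.

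For \eqref{eqn_gradient3}, specialise to $n=2$ and work in the special orthonormal frame. By Codazzi, $\nabla_k h_{ij\alpha}$ is fully symmetric in $(k,i,j)$, so for each fixed normal index $\alpha \in \{1,2\}$ there are exactly four independent components, which I label
\begin{equation*}
A_\alpha := \nabla_1 h_{11\alpha}, \quad B_\alpha := \nabla_1 h_{12\alpha}, \quad C_\alpha := \nabla_1 h_{22\alpha}, \quad D_\alpha := \nabla_2 h_{22\alpha}.
\end{equation*}
A routine expansion (accounting for the multiplicities with which each component appears) then gives
\begin{equation*}
|\nabla A|^2 = \sum_{\alpha=1}^{2}\bigl(A_\alpha^2 + 3B_\alpha^2 + 3C_\alpha^2 + D_\alpha^2\bigr).
\end{equation*}
Evaluating $\nabla_{\!\!evol}K^{\perp}$ at $(i,j,\alpha,\beta) = (1,2,1,2)$, summing over $p,q \in \{1,2\}$, and reducing every derivative of $h$ to this basis collapses it to
\begin{equation*}
\nabla_{\!\!evol}K^{\perp} = (A_1 B_2 - A_2 B_1) + 2(B_1 C_2 - B_2 C_1) + (C_1 D_2 - C_2 D_1).
\end{equation*}
Each of the three $2\times 2$ minors satisfies the elementary bound $2|xy - zw| \leq x^2 + y^2 + z^2 + w^2$, and the three resulting estimates combine to exactly $|\nabla A|^2$, giving \eqref{eqn_gradient3}.

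The main obstacle lies in the bookkeeping for \eqref{eqn_gradient3}: one must unpack the four $(p,q)$ contributions to $\nabla_{\!\!evol}K^{\perp}$ and use Codazzi to rewrite every entry in the basis $\{A_\alpha, B_\alpha, C_\alpha, D_\alpha\}$ before the three matched minors become visible. That these particular minors are precisely the ones controlled by AM--GM is forced by the antisymmetry inherent in $R^{\perp}$ coupled with total symmetry of $\nabla h$, which explains why the estimate is confined to $n=2$ and does not admit an obvious analogue in higher dimension or codimension.
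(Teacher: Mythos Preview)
Your proof is correct. Inequalities \eqref{eqn_gradient1} and \eqref{eqn_gradient2} are handled exactly as in the paper: the orthogonal decomposition of $\nabla h$ into its symmetrised-trace part and its trace-free remainder, followed by the algebraic rearrangement showing \eqref{eqn_gradient2} is equivalent to \eqref{eqn_gradient1}.

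For \eqref{eqn_gradient3} your route differs from the paper's. You invoke the Codazzi identity (valid here since the ambient space is a space form) to make $\nabla_k h_{ij\alpha}$ totally symmetric, reduce to the eight independent variables $A_\alpha,B_\alpha,C_\alpha,D_\alpha$, and then recognise $\nabla_{\!\!evol}K^\perp$ as a sum of three $2\times 2$ minors whose AM--GM bounds assemble exactly to $|\nabla A|^2$. The paper instead works \emph{without} Codazzi: it simply lists the eight cross terms in $\nabla_{\!\!evol}K^\perp$ and the sixteen squares in $|\nabla A|^2$ (using only $h_{ij\alpha}=h_{ji\alpha}$), and observes that Cauchy--Schwarz on each product $2xy\le x^2+y^2$ uses up each of the sixteen squares exactly once. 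Your approach is cleaner and makes the structure of the cancellation transparent; the paper's is more elementary in that it does not appeal to Codazzi and hence shows the inequality is a purely algebraic fact about the tensor $\nabla h$, independent of the ambient geometry.
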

\begin{proof} 
The first two inequalities are proven in \cite{Huisken1984}, motivated by similar estimates in the Ricci flow \cite{Hamilton1982}. They are established by decomposing the tensor $ \nabla A $ into orthogonal components $\nabla _i  h _{jk} = E _{ijk} + F _{ijk}$,
where 
\begin{align*}
 E_{ijk} = \frac {1}{n+2} ( g _{ij} \nabla _k H + g_{ik} \nabla _j H + g_{jk} \nabla _i H ),
\end{align*}
from which it follows that $ |\nabla A | ^ 2 \geq | E | ^ 2 = \frac{3}{n+ 2} | \nabla H | ^ 2 $. The second estimate follows from the first. In order to prove the third inequality, we evaluate directly 
\begin{multline*}
\sum _{p,q}\left ( \nabla_q h _{1p1} \nabla _ q h _{2p2} - \nabla _ q h _{2p 1} \nabla _ q h _{1p 2}\right )
= \nabla _{1} h _{111} \nabla_1 h_{212} - \nabla _ 1 h_{211} \nabla _ 1 h _{112} +  \nabla _1 h _{121} \nabla _ 1 h _{222}  \\
-  \nabla _ 1 h _{221} \nabla _ 1 h _{122} + \nabla_2h_{111}\nabla_2 h_{212}- \nabla_2 h_{211}\nabla_2 h_{112} +\nabla _ 2 h _{121} \nabla _2 h _{222} - \nabla _ 2 h _{221} \nabla _ 2 h _{122}.
\end{multline*}
Writing down all the terms in $ |\nabla A | ^ 2 $ and only using the symmetries of the second fundamental form
\begin{align*}
|\nabla A| ^ 2 & =( \nabla _ 1 h _{111} )^ 2 + ( \nabla _ 2 h _{111} )^ 2 + (\nabla_1 h_{121})^2+ (\nabla_1  h_{211})^2 + ( \nabla _ 1 h _{122} )^ 2 + (\nabla_1 h_{212})^2 +(\nabla_2 h_{112})^2 \\
&+ ( \nabla _{1} h _{222} )^ 2 +(\nabla_{2} h_{122})^2 + (\nabla _{2} h_{212})^2+( \nabla _2 h_{221} )^ 2 + (\nabla _ 2 h _{222} )^ 2\\
& +  (\nabla _ 1 h _{221})^ 2 + ( \nabla_2 h_{121})^2 + ( \nabla _1 h_{211})^2  +(\nabla_1 h_{112}) ^2,
\end{align*}
and the estimate follows by applying the Cauchy-Schwarz inequality and comparing terms.
\end{proof}

We consider here the pinching quantity
\begin{align*}
 \mc Q := | A| ^ 2 +2 \s | K ^ \perp|  - k |H| ^ 2 - \epsilon\bar K< 0
\end{align*}
The evolution equation becomes
\begin{align*}
\frac{\partial}{\partial t} \mc{Q} &= \Delta \mc{Q} - 2 \left( |\nabla A | ^ 2 + 2\gamma \frac{\Kp}{|\Kp|}\nabla_{evol} \Kp - k|\nabla H| ^ 2 \right) \\ &+ 2 R_1 + 2\gamma R_3 - 2 kR  _2 -4\bar K|\circo A|^2 + 2 \bar K |H|^2 - 4 k\bar K |H|^2  -4 \gamma \bar K |K^\perp| 
\end{align*}
We deal with the gradient terms first. Using the the gradient estimates \eqref{eqn_gradient1} and \eqref{eqn_gradient3} we have
\begin{align*}
-2 \left( |\nabla A | ^ 2 + 2\gamma\frac{\Kp}{|\Kp|}\nabla_{evol} \Kp - k |\nabla H| ^ 2 \right) &\leq \left(- 2 + 2\gamma + 2\frac43k \right) |\nabla A|^2,
\end{align*}
which is less than zero provided $\gamma < (1-4/3k)$.

Next we deal with the reaction terms
\begin{align}
\frac{d}{dt} \mc Q &= 2\sum _{\alpha ,\beta}\bigg(\sum_{i, j} h _{ij\alpha} h_{ij \beta} \bigg)^ 2  + 2| \Rp  | ^ 2 - 2 k \sum_{i, j} \left ( \sum_{\alpha} H _ \alpha h_{ij \alpha}\right ) ^ 2 + 2\gamma R_3   \nonumber \\
&= 2 | \Acirc_1 | ^ 4  - 2 \left( k - \frac{2}{n} \right) | \Acirc_1 | ^ 2 | H| ^ 2 - \frac 2 n \left( k - \frac{1}{n} \right) | H| ^ 4 \nonumber \\
&\quad + 4 \bigg( \sum_{i,j} \circo h _{ij1}\circo  h _{ij2}\bigg)^2  + 2 \bigg( \sum_{i ,j} \circo h_{ij 2} \circo h_{ij 2} \bigg) ^ 2 + 2 | \Rp | ^ 2 \nonumber \\
&\quad + 2\gamma|K^{\perp}|\left( |A|^2 + 2|\Acirc|^2 - 2b^2 \right)-4\bar K|\circo A|^2 -4 \bar K(k-\nicefrac12) |H|^2 -4 \gamma \bar K |K^\perp|. \label{eqn:Qrxn}
\end{align}

Written in the special orthonormal frame, the bracketed terms on the second last line above are
\begin{align*}
4  \bigg( \sum_{i,j} \circo h _{ij1}\circo  h _{ij2}\bigg)^2 & = 16 a ^ 2 b^ 2, \quad 2 \bigg( \sum_{i ,j} \circo h_{ij 2} \circo h_{ij 2} \bigg) ^ 2 =  2 ( 2 b ^ 2 + 2 c ^ 2 ) ^ 2. 
\end{align*}
Now suppose, for a contradiction, that there exists a first point in time where $\mc Q = 0$. Computing at this point, as $\mc Q = 0$ we have $ \left( k - \frac 1 n \right) | H |^ 2 = (|\Acirc |^ 2 + 2 \s  | K^ \perp | -\e\bar K)$, and substituting this into \eqref{eqn:Qrxn} to eliminate the $|H|^2$ terms we obtain after some computation
\begin{equation}\label{eqn:Qrxn}
\begin{split}
\frac{d}{dt} \mc Q &= \left(-\frac{1}{k-1/2} + 2 \right)4 a^2b^2 + \left(-\frac{1}{k-1/2} + 2 \right) \gamma|\Kp||\Acirc_1|^2  \\
&\quad +  \left(-\frac{3}{k-1/2} + 6 \right) \gamma|\Kp||\Acirc_2|^2 + \left(-\frac{1}{k-1/2} + 2 \right)|\Acirc_2|^4 \\
&\quad + \left(-\frac{(1 + 2\gamma^2)}{k-1/2} + 6 \right)|\Kp|^2 \\
&\quad + \epsilon\bar K\left( 2 + \frac{1}{k-1/2} \right) |\Acirc_1|^2 + \frac{2\epsilon \bar K}{k-1/2}|\Acirc_2|^2 + \frac{3\epsilon \bar K \gamma|\Kp|}{k-1/2} - \frac{\epsilon^2\bar K ^2}{k-1/2}\\
&-4\bar K|\circo A|^2 -4 \bar K(|\Acirc |^ 2 + 2 \s  | K^ \perp | -\e\bar K) -4 \gamma \bar K |K^\perp|
\end{split}
\end{equation}
The quartic terms are 
\begin{align*}
 &\left(-\frac{1}{k-1/2} + 2 \right)4 a^2b^2 + \left(-\frac{1}{k-1/2} + 2 \right) \gamma|\Kp||\Acirc_1|^2  \\
&\quad +  \left(-\frac{3}{k-1/2} + 6 \right) \gamma|\Kp||\Acirc_2|^2 + \left(-\frac{1}{k-1/2} + 2 \right)|\Acirc_2|^4 + \left(-\frac{(1 + 2\gamma^2)}{k-1/2} + 6 \right)|\Kp|^2
\end{align*}
and in the special orthonormal frame these are 
\begin{align*}
&4c^2 \left\{\left(-\frac{1}{k-1/2} + 2 \right)c^2 + \eta_1\left(-\frac{3}{k-1/2} + 6 \right) \gamma |ac|
 + \eta_2 \left(-\frac{(1 + 2\gamma^2)}{k-1/2} + 6 \right) a^2 \right\} \\
&\quad + 4|ac| \bigg\{\left(-\frac{1}{k-1/2} + 2 \right)\gamma a^2 + (1-\eta_2)\left(-\frac{(1 + 2\gamma^2)}{k-1/2} + 6 \right) |ac| \\
&\quad + (1-\eta_1)\left(-\frac{3}{k-1/2} + 6 \right) \gamma c^2 \bigg\}.
\end{align*}
We now substitute $\gamma = 1 - 4/3 k - \delta$ in order to keep the gradient term negative, and use the parameters $\eta_1, \eta_2$ to shift as much bad normal curvature into the first curly bracket to consume all of the good $c^4$ term. As it does not seem possible to reach $k = 3/4$, we have numerically explored the parameter values, with the result that the above term is strictly negative for $k = 29/40$.

The lower order terms are 
\begin{align*}
 &\epsilon\bar K\left( 2 + \frac{1}{k-1/2} \right) |\Acirc_1|^2 + \frac{2\epsilon \bar K}{k-1/2}|\Acirc_2|^2 + \frac{3\epsilon \bar K \gamma|\Kp|}{k-1/2} - \frac{\epsilon^2\bar K ^2}{k-1/2}\\
&-4\bar K|\circo A|^2 -4 \bar K(|\Acirc |^ 2 + 2 \s  | K^ \perp | -\e\bar K) -4 \gamma \bar K |K^\perp|
\end{align*}
Rearranging we have 
\begin{align*}
&\left(\left( 2 + \frac{1}{k-1/2} \right)\e - 8\right)\bar K |\circo A_1|^2 + \left(\frac{2\epsilon}{k-1/2}-8 \right)\bar K |\circo A_2|^2+ 3\gamma\bar K \left( \frac{\epsilon}{k-1/2} -4 \right)|\Kp|\\
&+\left(4   - \frac{\epsilon}{k-1/2}\right)\e \bar K^2
\end{align*}
The last two terms are zero if $\left(4   - \frac{\epsilon}{k-1/2}\right)= 0$. Therefore we require $\e = 4( k - \frac 1 2)$ and $\gamma \geq 0 $. We also require  $\left( 2 + \frac{1}{k-1/2} \right)\e - 8 \leq 0$, $\left(\frac{2\epsilon}{k-1/2}-8 \right)\leq0$ but this occurs if $\e \leq 2$ which is implied if $ k \leq \frac{29}{40}$ (in fact if $k \leq 1$).

\section{Minimal Submanifolds of the Sphere}
Minimal surfaces are geometric obstructions to enlarging preserved curvature cones. One minimal surface of particular relevance to the mean curvature flow in a sphere is the Clifford torus, which is a minimal in $\mathbb S^3$ and satisfies $|A|^2 = |H|^2$ when immersed into $\mathbb R^4$. For two surfaces immersed in the three-sphere, the Clifford torus is a geometric obstruction to pushing the pinching condition beyond $1/(n-1)$. However, the mean curvature flow is currently unable to reach the Clifford torus due to technical problems with the gradient terms (see \cite{Andrews2002} where this problem is overcome by a fully nonlinear flow). We speculate the geometric obstruction to two surfaces in $\mathbb S^4$ evolving by the mean curvature flow is the Veronese surface, which is minimal in $\mathbb S^4$ and satisfies $|A = \nicefrac{5}{6}|H|^2$ when immersed in $\mathbb R^5$. In this final section, we refine a famous theorem by Chern, do Carmo and Kobayashi by charactering minimal surfaces of the four-sphere in terms of a pointwise pinching of intrinsic and extrinsic curvatures, instead of the length of the second fundamental form as was done in their original paper \cite{ChernCarmoKobayashi1970}. This is achieved by exact calculation of the the nonlinearity in the Simons identity. The equivalent result for a Euclidean background appeared in \cite{Baker2017}, where it was used to greatly simplify the proof the Poincar\'e-type inequality obtained from the positivity of the Simons identity nonlinearity. We first compute the Simons identity nonlinearity exactly and then achieve the desired result by an application of the strong maximum principle.

\begin{prop}

Let $ \Sigma^2\subset \mbb S^4$. Then the contracted Simons' identity has the form
\begin{align*}
\frac12 \triangle |A| ^ 2 & = A_{ij} \cdot \nabla_i\nabla_j H+ |\nabla A|^2 +  ( H^2 - |A| ^ 2 + 2\bar K ) | \circo A|^2 - 2 | K ^ \perp | ^ 2\\
& =A_{ij} \cdot \nabla_i\nabla_j H+ |\nabla A| ^2 + K |\circo A|^2 - 2 |K^\perp|^2 
\end{align*}

\end{prop}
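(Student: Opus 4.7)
The plan is to start from the contracted Simons identity \eqref{e: contracted Simons' identity}, convert it from a statement about $|\circo A|^2$ to one about $|A|^2$, and then explicitly evaluate the nonlinearity $Z$ in dimension and codimension two using the special orthonormal frame introduced in the preceding section.

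First, I would convert \eqref{e: contracted Simons' identity} to an identity for $|A|^2$ by adding $\tfrac{1}{2n}\Delta |H|^2$ to both sides. Using $\Delta |H|^2 = 2|\nabla H|^2 + 2 H\cdot \Delta H$ together with the three orthogonal-decomposition identities
\begin{gather*}
|A|^2 = |\circo A|^2 + \tfrac{1}{n} |H|^2, \qquad |\nabla A|^2 = |\nabla \circo A|^2 + \tfrac{1}{n} |\nabla H|^2, \\
\circo A_{ij}\cdot \nabla_i \nabla_j H = A_{ij}\cdot \nabla_i\nabla_j H - \tfrac 1 n H\cdot \Delta H,
\end{gather*}
the Simons identity rewrites cleanly (in any codimension) as
\begin{equation*}
\tfrac 1 2 \Delta |A|^2 = A_{ij} \cdot \nabla_i \nabla_j H + |\nabla A|^2 + Z + n\bar K |\circo A|^2.
\end{equation*}

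Next, I would reduce $Z$ to a closed expression in $|H|$, $|\circo A|$, $|K^\perp|$ by direct computation in the special orthonormal frame, where $A_1$ and $A_2$ take the forms given in \eqref{eqn_split}. The matrix $M_{\alpha\beta} := \sum_{i,j} h_{ij\alpha} h_{ij\beta}$ has entries $M_{11} = \tfrac 1 2 |H|^2 + 2a^2$, $M_{22} = 2(b^2 + c^2)$ and $M_{12} = 2ab$, so $\sum M_{\alpha\beta}^2$ is an explicit polynomial in $|H|, a, b, c$. The cubic term $\sum H_\alpha h_{ip\alpha} h_{ij\beta} h_{pj\beta}$ reduces, since only $H_1 \neq 0$, to $|H|\bigl(\tr A_1^3 + \tr(A_1 A_2^2)\bigr)$, and the crucial simplification is $A_2^2 = (b^2 + c^2) I$, which makes both traces immediate. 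Using $|\Rp|^2 = 16 a^2 c^2 = 4 |K^\perp|^2$ (already recorded in the paper), then $|\circo A|^2 = 2(a^2+b^2+c^2)$ and the algebraic identity $(a^2+b^2+c^2)^2 = a^4+b^4+c^4 + 2(a^2b^2 + a^2c^2 + b^2c^2)$ to collect the quartic terms, I arrive at
\begin{equation*}
Z = \tfrac 1 2 |H|^2 |\circo A|^2 - |\circo A|^4 - 2|K^\perp|^2.
\end{equation*}

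Substituting this $Z$ back and regrouping the quartic part yields
\begin{equation*}
\tfrac{1}{2}\Delta |A|^2 = A_{ij}\cdot \nabla_i \nabla_j H + |\nabla A|^2 + \left(\tfrac{1}{2}|H|^2 - |\circo A|^2 + 2\bar K\right)|\circo A|^2 - 2|K^\perp|^2,
\end{equation*}
and the substitution $\tfrac{1}{2}|H|^2 - |\circo A|^2 = |H|^2 - |A|^2$ gives the first form of the proposition. For the second form, the Gauss equation $K = \bar K + \sum_\alpha(h_{11\alpha} h_{22\alpha} - h_{12\alpha}^2)$ for a surface in $\mbb S^4$ combined with the identity $|H|^2 - |A|^2 = 2\sum_\alpha (h_{11\alpha} h_{22\alpha} - h_{12\alpha}^2)$ identifies the scalar coefficient $|H|^2 - |A|^2 + 2\bar K$ with the Gauss curvature, recovering the second form. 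The main obstacle is purely algebraic bookkeeping in collapsing the quartic sums to a three-term expression for $Z$; the decisive simplification furnished by the special frame is that $A_2^2$ is proportional to the identity, which forces many of the cross contractions to vanish.
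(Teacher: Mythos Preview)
Your proof is correct and follows the same overall strategy as the paper---reduce to the contracted Simons identity and evaluate $Z$ explicitly in the special orthonormal frame---but your tactical organization is different and somewhat cleaner. The paper computes $Z$ by a Smyth-style pairing: it splits the cubic and quartic sums into diagonal and off-diagonal pieces, matches the first, second and third terms of each line, and in each pair factors out $(|H|^2 - |A|^2)$ directly, arriving at $Z = (|H|^2 - |A|^2)|\circo A|^2 - 2|K^\perp|^2$. You instead package the contractions as matrix traces $M_{\alpha\beta} = \tr(A_\alpha A_\beta)$ and $\tr(A_1 A_\beta^2)$, and the decisive observation you isolate---that $A_2^2 = (b^2+c^2)I$ is scalar in this frame---kills the cross terms in one stroke and makes the cubic piece immediate. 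Your intermediate form $Z = \tfrac12|H|^2|\circo A|^2 - |\circo A|^4 - 2|K^\perp|^2$ is of course equivalent to the paper's via $|A|^2 = |\circo A|^2 + \tfrac12|H|^2$. You also make explicit the passage from the $|\circo A|^2$-form of \eqref{e: contracted Simons' identity} to the $|A|^2$-form, which the paper simply asserts; and you supply the Gauss-equation argument for the second displayed form, which the paper leaves implicit. The trade-off: the paper's pairing method exposes the factor $|H|^2 - |A|^2$ earlier and more geometrically, while your trace approach is shorter and makes the algebraic mechanism (scalar $A_2^2$) transparent.
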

\begin{proof} 
The contracted Simons' identity takes the form
\begin{equation}\label{e: contracted Simons' identity}
	\frac{1}{2} \Delta \abs{A}^2 = A_{ij} \cdot \nabla_i\nabla_j H + \abs{\nabla A}^2 + Z + 2\bar K\abs{\ho }^2,
\end{equation}
where again
\begin{equation*}
	Z = -\sum_{\alpha, \beta} \bigg( \sum_{i,j} h_{ij\alpha} h_{ij\beta} \bigg)^2 - \abs{\Rp}^2 + \sum_{\substack{i,j,p \\ \alpha, \beta}} H_{\alpha} h_{ip\alpha} h_{ij\beta} h_{pj\beta}.
\end{equation*}
Splitting the first term on the right into diagonal and off-diagonal summations, and using $h_{ij1} = 0$ for $i \neq j$, we get
\begin{align*}
\sum_{i,j,p,\alpha,\beta} H_\alpha h _{ip\alpha} h _{ij \beta} h _{pj \beta} &=   \sum_i h_{ii\alpha} \sum_{i,j} h_{ii\alpha} (h_{ii1})^2  + \sum_i h_{ii\alpha} \sum_{i,j} h_{ii\alpha} (h_{ii2})^2  \\
&\quad +  \sum_i h_{ii\alpha} \sum_{i \neq j} h_{ii\alpha} (h_{ij2})^2   + \sum_i h_{ii\alpha} \sum_{i \neq p} h_{ip\alpha} h_{ij\beta}h_{pj\beta}.
\end{align*}
The final term on the right is zero, as computing in the special orthonormal frames we see
\begin{align*}
\sum_i h_{ii\alpha} \sum_{i \neq p} h_{ip\alpha} h_{ij\beta}h_{pj\beta}&= H \, \sum_{i \neq p} h_{ip1} h_{ij\beta}h_{pj\beta} =0,
\end{align*}
since $h_{ip1} = 0$ for $i \neq p$.
We similarly split the second term on the right of Z into diagonal and off-diagonal sums, and putting all terms together we have
\begin{align*}
Z &= \sum_i h_{ii\alpha}  \sum_{i,j} h_{ii\alpha} (h_{ii1})^2 + \sum_i h_{ii\alpha}  \sum_{i,j} h_{ii\alpha} (h_{ii2})^2  + \sum_i h_{ii\alpha} \sum_{i \neq j} h_{ii\alpha} (h_{ij2})^2 \\ 
&\quad - \sum_{\alpha} \bigg( \sum_{i} h_{ii1} h_{ii\alpha} \bigg)^2 - \sum_{\alpha} \bigg( \sum_{i} h_{ii2} h_{ii\alpha} \bigg)^2 - \sum_{\alpha} \bigg( \sum_{i\neq j} h_{ij2} h_{ij\alpha} \bigg)^2\\ &\quad-2\sum_{\alpha,\beta}\bigg( \sum_{i=j} h _{ij\alpha}h_{ij\beta} \sum_{i\neq j}h_{ij\alpha}h_{ij\beta} \bigg)
 - |\Rp| ^ 2.\\
\end{align*}
We estimate these terms in pairs, gathering the first, second and third terms of lines one and two, respectively. Dealing with the first pair of terms, we follow \cite{Smyth1973} but keep track of the normal curvature terms to find
\begin{align*}
\sum_i h_{ii\alpha} \sum_{i,j} h_{ii\alpha} (h_{ii1})^2  - \sum_{\alpha} \bigg( \sum_{i} h_{ii1} h_{ii\alpha} \bigg)^2 &= \left( \left(|H|^2 - |A|^2\right) + \sum_{\alpha}(h_{12\alpha})^2 \right)(h_{111} - h_{221})^2 \\
&= \left(|H|^2 - |A|^2\right)(4a^2) + 4a^2c^2.
\end{align*}
We estimate the second pair of terms in the same way, obtaining
\begin{align*}
\sum_i h_{ii\alpha}  \sum_{i,j} h_{ii\alpha} (h_{ii2})^2 -  \sum_{\alpha} \bigg( \sum_{i} h_{ii2} h_{ii\alpha} \bigg)^2 &= \left( \left(|H|^2 - |A|^2\right) + \sum_{\alpha}(h_{12\alpha})^2 \right)(h_{112} - h_{222})^2 \\
&= \left(|H|^2 - |A|^2\right)(4b^2) + 4b^2c^2.
\end{align*}
For the third pair of terms, as there are no diagonal terms to easily factor into the intrinsic curvature, we proceed by computing in the special orthonormal frames from the outset:
\begin{align*}
\sum_i h_{ii\alpha}  \sum_{i \neq j} h_{ii\alpha} (h_{ij2})^2  - \sum_{\alpha} \bigg( \sum_{i\neq j} h_{ij2} h_{ij\alpha} \bigg)^2 &= 4c^2 \left(\frac{|H|^2}{4} - c^2\right) \\
&= 4c^2 \left( \frac{|H|^2}{4} - a^2 - b^2 - c^2 \right) + 4c^2(a^2 + b^2) \\
&= 2c^2 (|H|^2 - |A|^2) + 4c^2(a^2 + b^2).
\end{align*}
With the final term, as $h_{ij1} = 0$ the only non-zero contribution comes from $\alpha,\beta =2$ and we see
\begin{align*}
2\sum_{\alpha,\beta}\bigg( \sum_{i=j} h _{ij\alpha}h_{ij\beta} \sum_{i\neq j}h_{ij\alpha}h_{ij\beta} \bigg) &= 2\bigg( \sum_{i=j} h _{ij2}h_{ij2} \sum_{i\neq j}h_{ij2}h_{ij2} \bigg)\\
&=2(2 b^2)(2c^2) = 8 b^2c^2.
\end{align*}
Collecting all the terms together, and recalling $|\Rp|^2 = 16a^2c^2 = 4|K^\perp|^2$, we achieve
\begin{align*}
Z &= \left(|H|^2 - |A|^2\right) (2a^2 + 2b^2 + 2c^2) + 8a^2c^2 + 8b^2c^2 - 16a^2c^2-8 b^2c^2\\
&= \left (|H|^2 - |A|^2 \right)|\Acirc|^2 - 2|K^\perp|^2.
\end{align*}
\end{proof}

We now apply the above proposition in the case $\Sigma^2$ is a minimal surface to conclude theorem \ref{thm:MainThm3}.

\begin{proof}[Proof of theorem \ref{thm:MainThm3}]
Suppose $\Sigma^2$ is minimally immersed in $\mathbb S^4$. Then the nonlinearity in the Simons identity satifies

\begin{align*}
 Z &= - |A|^ 4 - 2 | K ^ \perp| ^ 2 +2 |A|^ 2 = | A| ^ 2 \left ( - |A|^2 - \frac{| K^\perp|^2}{|A|^2} + 2  \right).
\end{align*}
Therefore let us assume $ - |A|^2 -\frac{2 |K^\perp|^2}{|A|^2} + 2 \geq  0$ or equivalently, $2K \geq \frac{2 |K^\perp|^2}{| A|^2}$
so that $ K^\perp= 2 ac \leq a ^ 2 + b ^ 2 + c ^ 2 \leq \frac 12 |A|^2$. Then computing the contracted Simons' identity we have 
\begin{align}\label{eqn_contracted_simons}
\frac 12 \triangle |A| ^ 2 & = |\nabla A | ^ 2 + |A|^2 \left ( - |A|^2 - \frac{2 |K^\perp| ^ 2}{|A|^ 2} +2  \right)
\end{align}
Therefore if we have $ - |A|^2 -\frac{2 |K^\perp|^2}{|A|^2} + 2 \geq  0$ this implies $ \frac 12 \triangle |A|^2 \geq 0$ so by the maximum principle $ |A|^2 \equiv C = const.$ By \eqref{eqn_contracted_simons}, either $ |A| ^ 2 \equiv 0$ or $|A|^ 2 + \frac{2 |K^\perp| ^ 2}{|A|^2}  =2 $. In both cases we have $ |\nabla A|^ 2 =0$. Since we have $ |A|^2 = const. \implies K^\perp = const.$  

Furthermore as 
$
-|A|^ 2 - \frac{2 |K^\perp| ^ 2}{|A|^2}  =2 
$
implies $ |A| ^ 2 = 1 \pm \sqrt{1 - 2 | K^\perp|^2}.$ Since $ K ^ \perp = 2 ac$ we get 
\begin{align*}
0 \leq |K^\perp|^2 \leq \frac 14 |A|^ 4 
\end{align*}
or $ \frac 43 \leq |A|^ 2 \leq 2$ because $|A|^2 ( 2 - |A|^2) = 2 | K ^ \perp|^2$ so that $\frac 1 4 |A|^4 \geq |A|^2 ( 2 - |A|^2) \geq 0.$
Furthermore this implies $ 0 \leq |K^\perp |^2 \leq \frac 43$. As $ 2 K  = 2 - |A|^2$
gives 
\begin{align*}
2 \int_{\Sigma} Kd \mu = 2 \frac{K^\perp}{|A|^2} \int _{\Sigma} K^\perp d \mu
\end{align*} 
which implies if $ K^\perp =0$ then $ K = 0$ or that $ |A|^2 = 2$ in which case we have a Clifford torus. Therefore let us assume $ K^\perp \neq 0$. We apply Simons' identity to $ K^\perp$.

For the sphere, where $ \bar R _{ijkl} = \bar K ( g_{ik} g _{jl} - g _{il} g _{jk}), \nabla \bar R =0$.

\begin{align}\label{eqn:SimonsId}
\Delta h_{ij\alpha}&=\nabla_i\nabla_jH_\alpha+H\cdot h_{ip}h_{pj\alpha}-h_{ij}\cdot h_{pq}h_{pq\alpha}+2h_{jq}\cdot h_{ip}h_{pq\alpha}-h_{iq}\cdot h_{qp}h_{pj\alpha}-h_{jq}\cdot h_{qp}h_{pi\alpha}\\
\nonumber&+H_\alpha R_{i\alpha j\beta}\nu _\beta -h_{ij\alpha} \bar R_{kjkp}h_{pi \alpha} + \bar R_{kikp} h_{pj\alpha} - 2 \bar R _{ipjp} h_{pq\alpha} - \bar h_{ij\alpha}\bar R_{ip\alpha \beta} \nu _\beta + 2 h_{ip\alpha} \bar R_{jp\alpha\beta} \nu _\beta \\
\nonumber& +\bar \nabla_k \bar R_{kij\beta} \nu _\beta - \nabla _i \bar R_{jkk\beta} \nu_ \beta\\
\nonumber &= \nabla_i\nabla_j H + Z_{ij\alpha} + 2 h_{ij\alpha} - g_{ij} H_{\alpha}
\end{align}
where $$Z_{ij\alpha} = H\cdot h_{ip}h_{pj\alpha}-h_{ij}\cdot h_{pq}h_{pq\alpha}+2h_{jq}\cdot h_{ip}h_{pq\alpha}-h_{iq}\cdot h_{qp}h_{pj\alpha}-h_{jq}\cdot h_{qp}h_{pi\alpha}.$$
Therefore computing in the special orthonormal frames above, with $ H=0$ we get
\begin{align*}
\triangle K^\perp = 2 K^\perp ( 2 - b ^ 2 - 3 a^2 - 3 c ^  2) =0.
\end{align*}
because we can show that $ |K^\perp|^2 = 4 a^2c^2$ and $ |A^+|^2 = 2 a ^2$. Hence $ \frac{ |K^\perp|^2}{|A^+|^2}= 2 c ^2$. Also $ |A^-|^2 = 2 b^2+ 2 c^2$ so that $2b^2 = |A^-|^2 - \frac{ |K^{\perp}|^2}{|A^+|^2}$.  
\begin{align*}
\Delta K^\perp = 2 K^\perp( 2 +  |A^-|^2 - \frac{|K^\perp|^2}{| A^+|^2} - \frac{3}{2} |A|^2) =0.   
\end{align*}

Since we assume $ K ^ \perp \neq 0$ , $2+ 2 b^2 - 3b^2 - 3 a ^ 2 - 3 c ^ 2 =0 $ or $2 +  |A^-|^2 - \frac{|K^\perp|^2}{| A^+|^2} - \frac{3}{2} |A|^2=0$. Therefore 
\begin{align*}
2 b ^ 2 = \frac 32 |A|^ 2 - 2 \quad \text{or} \quad |A^-|^2 - \frac{|K^\perp|^2}{| A^+|^2}  = \frac{3}{2}|A|^2- 2  
\end{align*}
Therefore $ b^2 =const.$ or $|A^-|^2 - \frac{|K^\perp|^2}{| A^+|^2} = const.$. We compute the Laplacian of $ b^ 2 = h_{112}^2$ and get 
\begin{align*}
\triangle h_{112} = h_{112}( 2 -2 a ^ 2 -2 b ^ 2 -2 c ^ 2 )=0.
\quad \text{or} \quad  
\Delta |A^-|^2 - \frac{|K^\perp|^2}{| A^+|^2} = |A^-|^2 - \frac{|K^\perp|^2}{| A^+|^2} ( 2 - |A|^2).
\end{align*}
Therefore $ b=0, |A^-|^2 - \frac{|K^\perp|^2}{| A^+|^2} =0 $ or $ |A|^ 2 =2$. If $|A|^ 2 =2$, then we have the Clifford torus. Therefore let us assume $ b =0, |A^-|^2 - \frac{|K^\perp|^2}{| A^+|^2} =0 $. Then from above $ |A|^ 2 = \frac 43$ and $ K^\perp = \frac 23$.  Then by a theorem of Chern-do Carmo-Kobayashi \cite{ChernCarmoKobayashi1970}, this surface is the Veronese surface. %This furthermore implies $ a^2 = c ^ 2 $ or that $ a = c$. 
\end{proof}

% \bib, bibdiv, biblist are defined by the amsrefs package.
% \bib, bibdiv, biblist are defined by the amsrefs package.
\begin{bibdiv}
\begin{biblist}

\bib{Andrews2002}{inproceedings}{
      author={Andrews, Ben},
       title={Positively curved surfaces in the three-sphere},
        date={2002},
   booktitle={Proceedings of the international congress of mathematicians, vol.
  ii (beijing, 2002)},
   publisher={Higher Ed. Press},
     address={Beijing},
       pages={221\ndash 230},
      review={\MR{MR1957035 (2003m:53110)}},
}

\bib{Andrews2010}{article}{
      author={Andrews, Ben},
      author={Baker, Charles},
       title={Mean curvature flow of pinched submanifolds to spheres},
        date={2010},
        ISSN={0022-040X},
     journal={J. Differential Geom.},
      volume={85},
      number={3},
       pages={357\ndash 395},
         url={http://projecteuclid.org/getRecord?id=euclid.jdg/1292940688},
      review={\MR{2739807 (2012a:53122)}},
}

\bib{Baker2017}{article}{
      author={Baker, Charles},
      author={Nguyen, Huy~The},
       title={Codimension two surfaces pinched by normal curvature evolving by
  mean curvature flow},
        date={2017},
        ISSN={0294-1449},
     journal={Ann. Inst. H. Poincar\'e Anal. Non Lin\'eaire},
      volume={34},
      number={6},
       pages={1599\ndash 1610},
         url={https://doi.org/10.1016/j.anihpc.2016.10.010},
      review={\MR{3712012}},
}

\bib{BakerNguyen2016}{article}{
      author={Baker, Charles},
      author={Nguyen, Huy~The},
       title={Codimension two surfaces pinched by normal curvature evolving by
  mean curvature flow},
        date={2017},
        ISSN={0294-1449},
     journal={Ann. Inst. H. Poincar\'{e} Anal. Non Lin\'{e}aire},
      volume={34},
      number={6},
       pages={1599\ndash 1610},
  url={https://doi-org.ezproxy.library.qmul.ac.uk/10.1016/j.anihpc.2016.10.010},
      review={\MR{3712012}},
}

\bib{ChernCarmoKobayashi1970}{incollection}{
      author={Chern, S.~S.},
      author={do~Carmo, M.},
      author={Kobayashi, S.},
       title={Minimal submanifolds of a sphere with second fundamental form of
  constant length},
        date={1970},
   booktitle={Functional {A}nalysis and {R}elated {F}ields ({P}roc. {C}onf. for
  {M}. {S}tone, {U}niv. {C}hicago, {C}hicago, {I}ll., 1968)},
   publisher={Springer, New York},
       pages={59\ndash 75},
      review={\MR{0273546}},
}

\bib{Hamilton1982}{article}{
      author={Hamilton, Richard~S.},
       title={Three-manifolds with positive {R}icci curvature},
        date={1982},
        ISSN={0022-040X},
     journal={J. Differential Geom.},
      volume={17},
      number={2},
       pages={255\ndash 306},
         url={http://projecteuclid.org/getRecord?id=euclid.jdg/1214436922},
      review={\MR{664497 (84a:53050)}},
}

\bib{Huisken1984}{article}{
      author={Huisken, Gerhard},
       title={Flow by mean curvature of convex surfaces into spheres},
        date={1984},
        ISSN={0022-040X},
     journal={J. Differential Geom.},
      volume={20},
      number={1},
       pages={237\ndash 266},
         url={http://projecteuclid.org/getRecord?id=euclid.jdg/1214438998},
      review={\MR{772132 (86j:53097)}},
}

\bib{Huisken1987}{article}{
      author={Huisken, Gerhard},
       title={Deforming hypersurfaces of the sphere by their mean curvature},
        date={1987},
        ISSN={0025-5874},
     journal={Math. Z.},
      volume={195},
      number={2},
       pages={205\ndash 219},
      review={\MR{MR892052 (88d:53058)}},
}

\bib{Huisken2009}{article}{
      author={Huisken, Gerhard},
      author={Sinestrari, Carlo},
       title={Mean curvature flow with surgeries of two-convex hypersurfaces},
        date={2009},
        ISSN={0020-9910},
     journal={Invent. Math.},
      volume={175},
      number={1},
       pages={137\ndash 221},
      review={\MR{MR2461428}},
}

\bib{Lawson1969}{article}{
      author={Lawson, H.~Blaine, Jr.},
       title={Local rigidity theorems for minimal hypersurfaces},
        date={1969},
        ISSN={0003-486X},
     journal={Ann. of Math. (2)},
      volume={89},
       pages={187\ndash 197},
      review={\MR{0238229 (38 \#6505)}},
}

\bib{Li1992}{article}{
      author={Li, An-Min},
      author={Li, Jimin},
       title={An intrinsic rigidity theorem for minimal submanifolds in a
  sphere},
        date={1992},
        ISSN={0003-889X},
     journal={Arch. Math. (Basel)},
      volume={58},
      number={6},
       pages={582\ndash 594},
         url={http://dx.doi.org/10.1007/BF01193528},
      review={\MR{1161925 (93b:53050)}},
}

\bib{Nguyen2018a}{unpublished}{
      author={Nguyen, Huy~The},
       title={Cylindrical estimates for high codimension mean curvature flow},
        date={2018},
        note={arXiv:1805.11808 [math.DG]},
}

\bib{Santos1994}{article}{
      author={Santos, Walcy},
       title={Submanifolds with parallel mean curvature vector in spheres},
        date={1994},
        ISSN={0040-8735},
     journal={Tohoku Math. J. (2)},
      volume={46},
      number={3},
       pages={403\ndash 415},
         url={http://dx.doi.org/10.2748/tmj/1178225720},
      review={\MR{1289187 (95f:53109)}},
}

\bib{Simons1968}{article}{
      author={Simons, James},
       title={Minimal {Varieties} in {Riemannian} {Manifolds}},
        date={1968},
     journal={Annals of Mathematics},
      volume={88},
      number={1},
       pages={62\ndash 105},
}

\bib{Smyth1973}{article}{
      author={Smyth, Brian},
       title={Submanifolds of constant mean curvature},
        date={1973},
     journal={Mathematische Annalen},
      volume={205},
      number={4},
       pages={265\ndash 280},
}

\end{biblist}
\end{bibdiv}

\end{document}